\numberwithin{equation}{section}
\theoremstyle{plain}
\newtheorem{theorem}{Theorem}[section]
\newtheorem{lemma}{Lemma}[section]
\newtheorem{remark}{Remark}[section]
\newcommand{\be}{\begin{equation*}}
\newcommand{\ee}{\end{equation*}}
\newcommand{\ben}{\begin{equation}}
\newcommand{\een}{\end{equation}}
\newcommand{\argmin}{\mathrm{argmin}}
\def\ml#1{\begin{multline*}{#1}\end{multline*}}
\def\mln#1{\begin{multline}{#1}\end{multline}}
\def\l{\left}
\def\r{\right}
\newcommand{\m}{\mathcal}
\newcommand{\mb}{\mathbb}
\newcommand{\card}{\mathrm{Card}}
\def\r{\right}
\def\l{\left}
\newcommand{\eps}{\varepsilon}
\newcommand{\var}{\mbox{Var}}
\newcommand{\wh}{\widehat}
\newcommand{\wt}{\widetilde}
\newcommand{\Ep}{\mathcal{E}_p}
\newcommand{\pr}[1]{\mathrm{Pr}{\left(#1 \right)}}
\newcommand{\scolor}[1]{\textcolor{cyan}{#1}} 
\newcommand{\rom}[1]{\uppercase\expandafter{\romannumeral #1\relax}}
\newcommand{\beas}{\begin{eqnarray*}}
\newcommand{\enas}{\end{eqnarray*}}
\newcommand{\bea}{\begin{eqnarray}}
\newcommand{\ena}{\end{eqnarray}}
\newcommand{\bml}{\begin{multline*}}
\newcommand{\eml}{\end{multline*}}
\newcommand{\bmln}{\begin{multline}}
\newcommand{\emln}{\end{multline}}
\newcommand{\bels}{\begin{align*}}
\newcommand{\enls}{\end{align*}}
\newcommand{\bel}{\begin{align}}
\newcommand{\enl}{\end{align}}
\newcommand{\ignore}[1]{}
\newcommand{\med}{\mathrm{median}}
\newcommand{\I}{\mathbf{1}}
\begin{document}

\begin{frontmatter}
\title{Robust and efficient mean estimation: an approach based on the properties of self-normalized sums}
\runtitle{Robust mean estimation}


\begin{aug}
\author{\fnms{Stanislav} \snm{Minsker}\ead[label=e1]{minsker@usc.edu}}

\address{Department of Mathematics,\\ University of Southern California\\
\printead{e1}}

\author{\fnms{Mohamed} \snm{Ndaoud}\ead[label=e2]{ndaoud@essec.edu}}

\address{Department of Decisions Sciences,\\
ESSEC Business School\\
\printead{e2}}

\runauthor{S. Minsker and M. Ndaoud}

\end{aug}

\begin{abstract}
Let $X$ be a random variable with unknown mean and finite variance. 
We present a new estimator of the mean of $X$ that is robust with respect to the possible presence of outliers in the sample, provides tight sub-Gaussian deviation guarantees without any additional assumptions on the shape or tails of the distribution, and moreover is asymptotically efficient. This is the first estimator that provably combines all these qualities in one package. Our construction is inspired by robustness properties possessed by the self-normalized sums. 
Theoretical findings are supplemented by numerical simulations highlighting strong performance of the proposed estimator in comparison with previously known techniques.
\end{abstract}


\begin{keyword}
\kwd{robust estimation}
\kwd{sub-Gaussian deviations}
\kwd{self-normalized sums}
\kwd{efficiency}
\end{keyword}
\tableofcontents
\end{frontmatter}

\mathtoolsset{showonlyrefs=true}
\section{Introduction.}
\label{sec:intro}


Let $X$ be a random variable with mean $\mb EX=\mu$ and variance $\var(X)=\sigma^2$, where both $\mu$ and $\sigma^2$ are unknown; in what follows, $P$ will denote the distribution of $X$ and $\m P_{2,\sigma}$ -- the class of all distributions possessing $2$ finite moments and having variance $\sigma^2$. 
We will be interested in robust estimators $\wh \mu$ of $\mu$ constructed from the data $X_1,\ldots,X_N$ generated as follows: the initial non-corrupted sample $X_1,\ldots,X_{N'}$ of independent, identically distributed copies of $X$ is merged with a set of $ O<N'$ outliers that are independent from the initial sample, and the combined sample of cardinality $N :=N'+ O$ is given as an input to an algorithm responsible for construction of the estimator. 
This contamination framework is more general than Huber's contamination model \cite{huber1964robust,chen2016general} where the outliers are assumed to be identically distributed, but weaker than the framework allowing adversarial outliers \cite{kearns1993learning,valiant1985learning} that may for instance depend on the initial sample. 
Robustness will be quantified by two properties: first, in the situation when $ O=0$, the estimators should admit tight non-asymptotic deviation bounds of the form 
\ben
\label{eq:deviations}
\l| \wh\mu - \mu\r|\leq C\sigma \sqrt{\frac{s}{N}}
\een
with probability at least $1-2e^{-s}$, where $C>0$ is an absolute constant. 
In particular, we will be interested in the estimators that attain such deviation guarantees uniformly over $0<s<\psi_P(N)$ where 
$\psi_P(N)$ is an increasing function that might depend on the law of $X$ \footnote{It follows from results in \cite{devroye2016sub} that the function $\psi_P(N)$ can not be chosen to be independent of $P$, no matter how slow its growth is. At the same time, our results show that for every $\sigma>0$ and $P\in \m P_{2,\sigma}$, such a function exists.}; guarantees of type \eqref{eq:deviations} can be informally labeled as ``robustness to heavy tails.'' 
Second, the estimators of interest should perform optimally with respect to the degree of outlier contamination characterized by the quantity $\eps:=\frac{ O}{N}$.  

Another important property that we focus on is \emph{asymptotic efficiency}. 
Informally speaking, efficiency measures how ``wasteful'' an estimator is: an efficient estimator will capture all the information available in the sample; alternatively, in many cases it is possible to conclude that the confidence intervals centered at an efficient estimator will have (at least asymptotically) smallest possible diameter. 
It is difficult to quantify efficiency using only finite-sample guarantees of type \eqref{eq:deviations} as the constants in these bounds are rarely sharp, at least, for practical considerations, and therefore a common approach is to take an asymptotic viewpoint. 
Specifically, we will be looking for the estimators that are asymptotically normal and have asymptotic variance that is as small as possible in the minimax sense, that is, 
$\sqrt{N}\l( \wh\mu - \mu\r)\xrightarrow{d} \m N(0,\nu^2)$ as $N\to\infty$, where $\xrightarrow{d}$ denotes convergence in distribution and $\nu^2:=\nu^2(\wh\mu,P)$ is such that 
\be
\sup_{P\in\m P_{2,\sigma}} \nu^2(\wh\mu,P) = \inf_{\wt \mu} \sup_{P\in\m P_{2,\sigma}} \nu^2(\wt\mu,P).
\ee
Here, the infimum is taken over all asymptotically normal (after rescaling by $\sqrt N$) estimators $\wt\mu$ of $\mu$. 
It is easy to see that $\inf_{\wt \mu} \sup_{P\in\m P_{2,\sigma}} \nu^2(\wt\mu,P) = \sigma^2$ (for reader's convenience, the proof of this  simple fact is given in Lemma \ref{lemma:minmax}), therefore, it suffices to find a robust estimator that satisfies 
$\sqrt{N}\l( \wh\mu - \mu\r)\xrightarrow{d} \m N(0,\sigma^2)$ for all $P\in \m P_{2,\sigma}$. 
For instance, the sample mean is an example of the estimator with required asymptotic properties that is not robust, while the popular median-of-means estimator \cite{Nemirovski1983Problem-complex00} is robust but not asymptotically efficient \cite{minsker2019distributed}. 

In this paper we construct the first, to the best of our knowledge, example of an estimator of the mean that is provably \textbf{(a)} robust to the heavy tails of the data-generating distribution $P$; \textbf{(b)} admits optimal error bounds with respect to the outlier contamination proportion $\eps = \frac{O}{N}$; \textbf{(c)} is asymptotically efficient and \textbf{(d)} is almost tuning-free, meaning that it does not require information about any parameters of the distribution besides the upper bound for the contamination proportion $\eps$. We also show how to make our procedure fully adaptive. Our construction is novel and is inspired by the properties of self-normalized sums. 

The rest of the paper is organized as follows: section \ref{sec:construction} introduces the estimator and explains the main ideas behind its construction; the key results are presented in section \ref{sec:main}, while comparison of our estimator with existing robust estimation techniques in the context of properties \textbf{(a)} - \textbf{(d)} is presented in section \ref{sec:comparison}. Finally, a fully adaptive procedure is outlined in section \ref{sec:adaptation} while the supporting numerical simulations are included in section \ref{sec:simul}. The proofs of the main results are contained in section \ref{sec:proofs}. 
All notation and auxiliary results will be introduced on demand.


\section{Construction of the estimator.}
\label{sec:construction}

We restrict our attention to the estimators that are obtained via aggregating the sample means evaluated over disjoint subsets (also referred to as ``blocks'') of the data. Specifically, assume that $\{1,\ldots,N\} = \bigcup_{j=1}^k G_j$ where $G_i\cap G_j=\emptyset$ for $i\ne j$ and $|G_j| = n= N/k$ is an integer, and let $\bar \mu_j:=\frac{1}{|G_j|}\sum_{i\in G_j} X_i$ be the sample mean of the observations indexed by $G_j$. We consider estimators $\wh\mu_N$ of the form
\ben
\label{eq:estimator}
\wh\mu_N = \sum_{j=1}^k \alpha_j \bar\mu_j
\een
for some (possibly random and data-dependent) nonnegative weights $\alpha_1,\ldots,\alpha_k$ such that $\sum_{j=1}^k \alpha_j=1$. 
For example, the well known median-of-means estimator \cite{Nemirovski1983Problem-complex00,alon1996space,lerasle2011robust} corresponds to the case $\alpha_{\wh j} = 1$ for $\wh j$ such that $\bar\mu_{\wh j} = \med\l(\bar\mu_1,\ldots,\bar \mu_k\r)$ and $\alpha_{\wh j} = 0$ otherwise. 
Construction proposed in this paper starts with an observation that  it is natural to choose the weights that are inversely proportional to some increasing function of the standard deviation of each block. Indeed, the estimation error of the sample mean $\bar \mu_j$ in each block of the data is essentially controlled by the corresponding sample standard deviation $\wh \sigma_j:=\sqrt{\frac{1}{|G_j|}\sum_{i\in G_j} (X_i - \bar \mu_j)^2}$. 
To understand why, consider the following obvious identity:
\[
\l|\bar \mu_j - \mu\r| = \l| \frac{\bar \mu_j - \mu}{\wh\sigma_j}\r|\wh\sigma_j.
\]
The random variable $\frac{\bar \mu_j - \mu}{\wh{\sigma}_j}$, which is equal up to normalization to the Student's t-statistic, is known to be tightly concentrated around $0$: namely, it is bounded by $\l|\sqrt{\frac{t}{n}}\r|$ with probability at least $1-e^{-ct}$ for $t\leq c'n$ where $c,c'$ are positive constants, even if data are heavy-tailed (a more precise version of this fact is stated below). 
Therefore, $\l|\bar \mu_j - \mu\r|$ is bounded by a multiple of $\frac{\wh\sigma_j}{\sqrt n}$ with high probability. 
And, while the error $\l|\bar \mu_j - \mu\r|$ is unknown, the quantity $\wh\sigma_j$ is fully data-dependent. 
This motivates the choice of the weights of the form 
\ben
\label{eq:weights}
\alpha_j = \frac{1/\wh\sigma^p_j }{\sum_{i=1}^k 1/\wh\sigma^p_i}
\een
for some $p\geq 1$; in what follows, the estimator \eqref{eq:estimator} with weights \eqref{eq:weights} will be denoted 
$\wh{\mu}_{N,p}$. When we need to emphasize the specific value of $k$ used in the construction, we will write $\wh{\mu}_{N,p}(k)$. 
Observe that when $p=1$, the estimation error satisfies 
\ben
\label{eq:err}
\wh\mu_{N,1} - \mu = \frac{\frac{1}{k}\sum_{j=1}^k \frac{\bar\mu_j-\mu}{\wh\sigma_j}}{\frac{1}{k}\sum_{j=1}^k \frac{1}{\wh\sigma_j}},
\een
which is proportional to the average of t-statistics evaluated over $k$ independent subsamples. It is therefore natural to expect that $\wh\mu_{N,1} - \mu$ will satisfy strong deviation guarantees. 

Let us present now an example where the weights corresponding to $p=2$ arise naturally. Observe that one can model outliers by assuming that the variances of the data differ across $k$ groups, where large variance corresponds to a corrupted subsample: $X_i, \ i\in G_j\sim N(\mu,\sigma_j^2)$ for some $\mu\in \mb R$ and positive but unknown $\sigma_1,\ldots,\sigma_k$. The maximum likelihood estimator $\wt\mu$ in this model is easily seen to satisfy
$\wt\mu = \argmin_{z\in \mb R}\sum_{j=1}^k |G_j| \log\l( \sum_{i\in G_j}\l( X_i - z\r)^2\r)$. 
Equivalently, $\wt \mu$ can be defined via 
\[
\wt \mu = \argmin_{z\in \mb R}\sum_{j=1}^k |G_j| \log\l(1 + \l( \frac{\bar \mu_j - z}{\wh\sigma_j}\r)^2 \r).
\]
An approximate solution can be obtained via minimizing the first-order approximation of the loss function $z\mapsto \sum_{j=1}^k |G_j|\l( \frac{\bar \mu_j - z}{\wh\sigma_j}\r)^2$ that attains its minimum at the point
\[
\sum_{j=1}^k \bar \mu_j \frac{|G_j|/\wh\sigma_j^2}
{\sum_{i=1}^k |G_i|/\wh\sigma_i^2} =
\sum_{j=1}^k \bar \mu_j \frac{1/\wh\sigma_j^2}{\sum_{i=1}^k 1/\wh\sigma_i^2},
\]
which is the estimator \eqref{eq:estimator} with weights defined in \eqref{eq:weights} for $p=2$. 
In the following sections we will present non-asymptotic deviation bounds for the estimator $\wh\mu_{N,p}$ for all values of $p\geq 1$ and will establish its asymptotic efficiency in the absence of outliers. 

\section{Main results.}
\label{sec:main}

The goal of this section is to prove the deviation inequality for the estimation error $\wh\mu_{N,p} - \mu$ for any $p\geq1$, where the estimator $\wh\mu_{N,p}$ corresponds to the weights defined by \eqref{eq:weights}.

\subsection{Preliminaries.}

In this section, we consider the simple framework of i.i.d data without outliers. We will start with a brief review of concentration inequalities for the self-normalized sums. 
It is known (for example, see the book \cite{pena2008self}) that the properties of the t-statistics 
\ben
\label{eq:t-stat}
T_j:= \begin{cases} \frac{\bar{\mu}_j - \mu}{\wh{\sigma}_j}, & \wh\sigma_j > 0, 
\\
0, & \wh\sigma_j=0
\end{cases}
\een
evaluated over subsamples indexed by $G_1,\ldots,G_k$ are closely related to the behavior of the self-normalized sums 
$Q_j:=\frac{\bar \mu_j - \mu}{V_j}$ where $V_j^2 := \frac{1}{|G_j|}\sum_{i\in G_j} (X_i - \mu)^2$. 
Indeed, it is easy to see that $T_j = f(Q_j)$ where $f(z) = \frac{z}{\sqrt{1-z^2}}$. 
The following inequality is well known (cf. Theorem 2.16 \cite{pena2008self}): for any $j=1,\ldots,k$ and any $x>0$, \footnote{Since $|Q_j|\leq 1$, the inequality is nontrivial only for $x < \sqrt{|G_j|}$. }
\ben
\label{eq:sns-2}
\l|Q_j \r| \leq \frac{x}{\sqrt{|G_j|}}\l( 1 + \frac{4\sigma}{V_j}\r) 
\een
with probability at least $1-4e^{-x^2/2}$, as long as $\mb E(X-\mu)^2 < \infty$. 
In order to deduce a non-random upper bound from \eqref{eq:sns-2}, it suffices to control the ratio $\frac{1}{V_j}$. 
To this end, define
\be
\zeta(X) := \inf \l\{ a>0 : \mb E \l(|X-\mu|^{2} \I\{|X-\mu|\leq \sigma \cdot a\}\r) \geq \sigma^2/2 \r\}.
\ee
As long as $\var(X)$ is finite, it is clear that $\zeta(X)<\infty$. 
\begin{lemma}\label{lem:sns-2}
With probability at least $1-e^{-\frac{n}{40\zeta^2(X) \vee 6}}$, $V_j \geq \frac{\sigma}{2}$. 
\end{lemma}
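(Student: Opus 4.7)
The plan is to reduce the lower bound on $V_j^2 = \frac{1}{n}\sum_{i\in G_j}(X_i-\mu)^2$ to a one-sided Bernstein bound for a bounded truncation of $(X_i-\mu)^2$. Define
\[
Y_i := (X_i-\mu)^2 \, \I\{|X_i-\mu|\leq \sigma\,\zeta(X)\}, \qquad i\in G_j.
\]
Then $0 \leq Y_i \leq \sigma^2 \zeta^2(X)$, and the very definition of $\zeta(X)$ gives $\mb E Y_i \geq \sigma^2/2$. Moreover, because $Y_i \leq (X_i-\mu)^2$ pointwise,
\[
V_j^2 \geq \frac{1}{n}\sum_{i\in G_j} Y_i,
\]
so it suffices to show that $\frac{1}{n}\sum_{i\in G_j} Y_i \geq \sigma^2/4$ with the claimed probability; this implies $V_j \geq \sigma/2$.

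The key step is a one-sided Bernstein inequality applied to the i.i.d.\ bounded variables $Y_i$. I would use the bounds $\mathrm{Var}(Y_i) \leq \mb E Y_i^2 \leq \sigma^2\zeta^2(X)\cdot \mb E Y_i \leq \sigma^4\zeta^2(X)$ and $|Y_i - \mb E Y_i|\leq \sigma^2 \zeta^2(X)$, together with the lower bound $\mb E Y_i \geq \sigma^2/2$, to write
\[
\Pr\!\left(\frac{1}{n}\sum_{i\in G_j} Y_i < \frac{\sigma^2}{4}\right) \leq \Pr\!\left(\frac{1}{n}\sum_{i\in G_j}(Y_i-\mb E Y_i) < -\frac{\sigma^2}{4}\right) \leq \exp\!\left(-\frac{n(\sigma^2/4)^2/2}{\sigma^4\zeta^2(X) + \sigma^2\zeta^2(X)\cdot(\sigma^2/4)/3}\right),
\]
which simplifies to an exponential of the form $\exp(-n/(c\,\zeta^2(X)))$ for a universal constant $c$. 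Tracking the constants carefully yields the factor $40\zeta^2(X)$ in the stated bound.

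Finally, the $\vee\, 6$ in the denominator handles the regime where $\zeta^2(X)$ is small; note in fact that $\zeta^2(X)\geq 1/2$ always, since $Y_i \leq \sigma^2\zeta^2(X)$ combined with $\mb E Y_i\geq \sigma^2/2$ forces $\zeta^2(X)\geq 1/2$. So this term only affects the constant and can be absorbed by a slightly looser choice when applying Bernstein in the small-$\zeta$ regime (where the bounded-difference term in Bernstein dominates the variance term). The only non-routine part is picking the constants in Bernstein's inequality so that both regimes combine into the single factor $40\zeta^2(X)\vee 6$; everything else is a direct consequence of the definition of $\zeta(X)$ and the pointwise domination $Y_i\leq (X_i-\mu)^2$.
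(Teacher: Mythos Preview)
Your proposal is correct and follows essentially the same route as the paper: truncate $(X_i-\mu)^2$ at level $\sigma^2\zeta^2(X)$, use the definition of $\zeta(X)$ to get the mean lower bound $\sigma^2/2$, and apply Bernstein's inequality with the variance bound $\mb E Y_i^2 \leq \sigma^2\zeta^2(X)\,\mb E Y_i \leq \sigma^4\zeta^2(X)$. Your extra observation that $\zeta^2(X)\geq 1/2$ (forcing $40\zeta^2(X)\geq 20$, so the $\vee\,6$ is in fact never active) is a nice sharpening that the paper does not make explicit.
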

\noindent Combining this inequality with the bound \eqref{eq:sns-2}, we deduce that for any $1\leq j\leq k$ and any $x>0$, 
\ben\label{eq:sns-6}
|Q_j| \leq \frac{9x}{\sqrt{n}},
\een
with probability at least $1 - 4e^{-x^2/2} - e^{-\frac{n}{40\zeta^2(X) \vee 6}}$. 
If moreover $x\leq \sqrt{n}/ 18 $, then the relation $T_j = f(Q_j)$ immediately implies that t-statistics $T_j$ satisfy the bound
\ben
\label{eq:sns-5}
|T_j| \leq \frac{11x}{\sqrt{n}}
\een
with probability at least $1 - 4e^{-x^2/2} - e^{-\frac{n}{40\zeta^2(X) \vee 6}}$ for each $1\leq j\leq k$. 
Alternatively, the previous argument also implies that the random variables 
$T_j \I\{ |Q_j| \leq 1/2, \ V_j\geq \sigma/2 \}$ satisfy the deviation inequality 
\be
\pr{\l| T_j \I\{ |Q_j| \leq 1/2, \ V_j\geq \sigma/2 \}\r| \geq \frac{11x}{\sqrt n}}\leq 4e^{-x^2/2}. 
\ee
Therefore, we conclude that the random variable $T_j$, truncated at the right level, behaves like a sub-Gaussian random variable.\footnote{$X$ has sub-Gaussian distribution if $\exists K>0$ such that $\forall p\geq 1\;  \mb E(|X-\mu|^{p})^{1/p} \leq K\sqrt{p})$. See section 2.5 in \cite{vershynin2018high} for the details.} This fact is formalized in Lemma \ref{lem:concentration2} and is one of the key ingredients used to show that proposed estimators have sub-Gaussian deviations.

\subsection{Non-asymptotic deviation inequalities.}

In the simplest case $p=1$, equation \eqref{eq:err} suggests that in order to bound the estimation error $\wh\mu_{N,p} - \mu$, it suffices to control the average
$\frac{1}{k}\sum_{j=1}^k \frac{\bar\mu_j-\mu}{\wh\sigma_j}$ and the harmonic mean 
$\l( {\frac{1}{k}\sum_{j=1}^k \frac{1}{\wh\sigma_j}} \r)^{-1}$ separately. 
Similar intuition holds for other values of $p$ as well. In what follows, we will always assume that $ O \leq Ck$ for some $C<1$, where $ O$ is the number of outliers in the sample. Define the event
\ben
\label{eq:Ep}
\Ep := \l\{ \l(\frac{1}{k}\sum_{j=1}^k \frac{1}{\wh \sigma^p_j}\r)^{-1}  \leq \l(\frac{4\sigma}{1-C}\r)^{p} \r\}.
\een
 $\Ep$ holds whenever the harmonic mean of the (powers of) sample variances does not exceed the corresponding power of the true variance $\sigma^2$ by too much. In particular, in the absence of outliers, we can replace $C$ by $0$ in the previous event. Informally speaking, the harmonic mean of a set of numbers is controlled by its smallest elements, therefore, it is natural to expect that the event $\Ep$ holds with overwhelming probability; this claim will be formalized in the following lemma whose proof is deferred to Section~\ref{proof:lem:denominator}. 
\begin{lemma}
\label{lem:denominator}
Recall the contamination framework defined in Section \ref{sec:intro}. Suppose that $\mb E|X-\mu|^{1+\delta}<\infty$ for some $1\leq \delta \leq 2$ and that $ O \leq Ck$ for some $C<1$. Then
\be
\pr{\Ep}
\geq 1 - e^{-ck(1-C)(1 + (\delta-1)\log{n})}
\ee
for some constant $c>0$ that depends on $\delta$ and $\mb E|X-\mu|^{1+\delta}$. Moreover, if $X$ has sub-Gaussian distribution, then
\be
\pr{\Ep}
\geq 1 - e^{-c(1-C)N}
\ee
for some constant $c>0$ that depends on the distribution of $X$.
\end{lemma}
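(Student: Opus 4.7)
The plan is to certify $\mathcal{E}_p$ by giving a lower bound on $\frac{1}{k}\sum_{j=1}^k \wh\sigma_j^{-p}$. A deterministic reduction comes first: if at least $(1-C)k/2$ of the blocks satisfy $\wh\sigma_j\leq 2\sigma$, then
\[
\frac{1}{k}\sum_{j=1}^k \frac{1}{\wh\sigma_j^p}\;\geq\; \frac{(1-C)/2}{(2\sigma)^p}\;\geq\; \left(\frac{1-C}{4\sigma}\right)^p,
\]
where the last step uses the elementary inequality $(1-C)/2\geq \bigl((1-C)/2\bigr)^p$, valid for $p\geq 1$ and $C<1$. Thus it suffices to exhibit many ``good'' blocks.

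Next I will use that $\mathcal{O}\leq Ck$ forces at least $(1-C)k$ clean blocks (those containing only i.i.d.\ copies of $X$), and that on a clean block the identity $V_j^2=\wh\sigma_j^2+(\bar\mu_j-\mu)^2$ yields $\wh\sigma_j\leq V_j$. So I will bound $q_0:=\pr{V_j>2\sigma}=\pr{\bar Y_j>4\sigma^2}$, where $Y_i=(X_i-\mu)^2$ has mean $\sigma^2$. Markov's inequality alone gives $q_0\leq 1/4$. With the extra moment in hand, $\mb E|Y-\sigma^2|^{r}<\infty$ for $r=(1+\delta)/2\in[1,3/2]$, and the von Bahr--Esseen inequality produces $\mb E|\bar Y_j-\sigma^2|^r\leq 2n^{1-r}\mb E|Y-\sigma^2|^r$, so a second application of Markov refines this to $q_0\leq K\,n^{-(\delta-1)/2}$.

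The final step is a Chernoff bound on the number $B$ of clean blocks that are bad ($V_j>2\sigma$): $B$ is stochastically dominated by a $\mathrm{Bin}(k_c,q_0)$ variable with $k_c\geq (1-C)k$, so
\[
\pr{B\geq k_c/2}\leq \exp\bigl(-k_c\, D(1/2\,\|\,q_0)\bigr),\qquad D(1/2\|q_0)=\tfrac{1}{2}\log\tfrac{1}{4q_0(1-q_0)}.
\]
Combining $q_0\leq 1/4$ with $q_0\leq K n^{-(\delta-1)/2}$ gives $D(1/2\|q_0)\gtrsim 1+(\delta-1)\log n$, and hence $\pr{\mathcal{E}_p^c}\leq \exp\!\bigl(-c(1-C)k\bigl(1+(\delta-1)\log n\bigr)\bigr)$. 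On the complementary event, at least $k_c/2\geq (1-C)k/2$ clean blocks are good, and the deterministic reduction from the first paragraph closes the argument. For sub-Gaussian $X$, Bernstein's inequality applied to the sub-exponential $Y_i$'s tightens the tail to $q_0\leq e^{-cn}$; the same Chernoff step then yields $\exp(-c'(1-C)kn)=\exp(-c'(1-C)N)$.

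The main obstacle I anticipate is calibrating $q_0$ and $D(1/2\|q_0)$ so that the two regimes interpolate into the clean form $1+(\delta-1)\log n$: the constant ``$1$'' must survive at $\delta=1$ (where only Markov is available and $q_0$ stays of constant order), while the ``$(\delta-1)\log n$'' must emerge for $\delta>1$ from $\log(1/q_0)\asymp \tfrac{\delta-1}{2}\log n$. Everything else --- the reduction to counting good blocks, the pointwise bound $\wh\sigma_j\leq V_j$, the contamination accounting, and the sub-Gaussian variant --- is largely routine once the Chernoff calibration is settled.
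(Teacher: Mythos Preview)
Your proposal is correct and follows essentially the same route as the paper: both reduce $\mathcal{E}_p$ to the event that a constant fraction of the outlier-free blocks satisfy $\wh\sigma_j\leq 2\sigma$ (the paper phrases this via the harmonic-mean/median inequality and order statistics, you via direct counting), bound $\Pr(V_j^2\geq 4\sigma^2)$ by Markov plus von Bahr--Esseen (giving $\pi\lesssim n^{-(\delta-1)/2}$), and then apply a Chernoff/Bennett bound to the resulting Bernoulli sum; for the sub-Gaussian case the paper invokes Hanson--Wright where you invoke Bernstein for sub-exponential summands, which are equivalent here.
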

Note that the condition $ O < Ck$ only requires $C$ to be smaller than $1$: it means that for our technique to reliably estimate the true mean, it suffices that any constant positive fraction of subsamples indexed by $G_1,\ldots,G_k$ are free from the outliers, while the popular median-of-means estimator requires at least $50\%$ of the subsamples to be ``clean''. In practical applications, this difference can be substantial, and our simulation results (see section \ref{sec:simul}) confirm this observation.

Our first result presents non-asymptotic deviation bounds for the case when the sample does not contain outliers. 
\begin{theorem}
\label{thm:deviation}
Suppose that $\mb E|X-\mu|^{1+\delta}<\infty$ for some $1\leq \delta \leq 2$. Then with probability at least $1-2e^{-s} - ke^{-cn} - \Pr(\Ep^{c})$,
\ben
\label{eq:sub-Gauss}
|\wh{\mu}_{N,p} - \mu| \leq C_p\sigma \l( \sqrt{\frac{s+1}{N}} + \phi(\delta,n) \r) 
\een
where $c>0$ depends only on $\zeta(X)$, $C_p>0$ depends only on $p$, and 
\be
\phi(\delta,n) = \begin{cases}
o(n^{-\delta/2}), &\delta<2, \\ O(n^{-1}), & \delta = 2
\end{cases}
\ee
as $n\to\infty$.
\end{theorem}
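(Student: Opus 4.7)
The plan is to express the estimation error as a ratio $N_k/D_k$, use the hypothesis that $\Ep$ holds to bound the denominator from below, and establish sub-Gaussian concentration for the numerator by exploiting the truncated t-statistic behavior developed in the preliminaries. Writing
\[
\wh\mu_{N,p} - \mu = \frac{N_k}{D_k}, \quad N_k := \frac{1}{k}\sum_{j=1}^k \frac{\bar\mu_j - \mu}{\wh\sigma_j^p}, \quad D_k := \frac{1}{k}\sum_{j=1}^k \frac{1}{\wh\sigma_j^p},
\]
the event $\Ep$ (with $C=0$ in \eqref{eq:Ep}, since the statement concerns the outlier-free case) yields $1/D_k \leq (4\sigma)^p$. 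It therefore suffices to prove $|N_k| \leq C_p\sigma^{-(p-1)}\l(\sqrt{(s+1)/N} + \phi(\delta,n)\r)$ on a suitable high-probability event.

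Next I would introduce
\[
\m G := \bigcap_{j=1}^k \l\{|Q_j| \leq 1/2\r\} \cap \l\{V_j \geq \sigma/2\r\},
\]
whose complement has probability at most $ke^{-cn}$ by Lemma~\ref{lem:sns-2} combined with \eqref{eq:sns-6}. The identity $V_j^2 = \wh\sigma_j^2 + (\bar\mu_j - \mu)^2$ implies that on $\m G$ one has $\wh\sigma_j^2 = V_j^2(1 - Q_j^2) \geq 3\sigma^2/16$, so the weights $\wh\sigma_j^{-(p-1)}$ appearing in $N_k = \frac{1}{k}\sum_j T_j/\wh\sigma_j^{p-1}$ are uniformly bounded by $C\sigma^{-(p-1)}$. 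Moreover, on $\m G$ each $T_j$ agrees with its truncated version $T_j^\star := T_j\,\I\{|Q_j|\leq 1/2,\,V_j \geq \sigma/2\}$, which is sub-Gaussian of scale $O(1/\sqrt{n})$ by Lemma~\ref{lem:concentration2}.

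I would then decompose $N_k\cdot \I_\m G$ into a centered stochastic piece and a bias piece:
\[
N_k\cdot \I_\m G = \frac{1}{k}\sum_{j=1}^k \frac{T_j^\star - \mb E T_j^\star}{\wh\sigma_j^{p-1}}\cdot \I_\m G \; + \; \mb E T_j^\star \cdot \frac{1}{k}\sum_{j=1}^k \frac{\I_\m G}{\wh\sigma_j^{p-1}}.
\]
For the stochastic piece, independence of the pairs $(T_j^\star,\wh\sigma_j)$ across $j$ together with the uniform upper bound on the multipliers allows a Hoeffding-type argument to give a sub-Gaussian tail of order $\sigma^{-(p-1)}\sqrt{s/(kn)} = \sigma^{-(p-1)}\sqrt{s/N}$ with probability at least $1-2e^{-s}$. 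The bias piece requires $|\mb E T_j^\star|\leq C\phi(\delta,n)$: since the leading term in $T_j^\star = f(Q_j^\star)$, $f(z)=z/\sqrt{1-z^2}$, is $(\bar\mu_j-\mu)/\sigma$, which has mean zero, the expectation is driven by the next-order correction. When $\delta=2$, an Edgeworth/Berry-Esseen expansion using $\mb E|X-\mu|^3<\infty$ produces the $O(1/n)$ rate; when $1\leq \delta<2$, uniform integrability of the CLT-scale sequence $\sqrt{n}(\bar\mu_j-\mu)$ yields the $o(n^{-\delta/2})$ rate. Combining and multiplying by $1/D_k\leq (4\sigma)^p$ gives the claim with the declared failure probability $2e^{-s}+ke^{-cn}$.

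The main obstacle will be the bias estimate. The sub-Gaussian concentration of the stochastic piece is essentially a routine consequence of Lemma~\ref{lem:concentration2} once one restricts to $\m G$. In contrast, bounding $|\mb E T_j^\star|$ under only a $(1+\delta)$-moment assumption requires a delicate moment expansion of the nonlinear self-normalized functional $T_j^\star = f(Q_j^\star)$: the $\delta=2$ case follows from a Berry-Esseen-style calculation, while for $\delta<2$ one must invoke uniform integrability/tightness at the CLT scale. A secondary subtlety is the coupling between $T_j$ and $\wh\sigma_j$ inside the same block, which we side-step by working on $\m G$, where the denominators stay of constant order $\sigma$ and the multipliers and the t-statistics can be treated essentially independently.
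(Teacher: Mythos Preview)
Your overall architecture—write $\wh\mu_{N,p}-\mu$ as a ratio, bound the denominator via $\Ep$, restrict to the good event $\m G=\bigcap_j\{|Q_j|\le 1/2,\,V_j\ge\sigma/2\}$, and split the numerator into a fluctuation term and a bias term—is exactly what the paper does. The discrepancy is in \emph{where} you center. You center only $T_j^\star$ and leave the random weight $\wh\sigma_j^{-(p-1)}$ as a bounded multiplier, whereas the paper centers the whole summand $\sigma^{p-1}T_j\,\wh\sigma_j^{-(p-1)}\I\{\m O_j\}$; that centered product is precisely the $w_j$ of Lemma~\ref{lem:concentration2}, and Lemma~\ref{lem:bias__variance_student} then bounds its expectation by $\phi(\delta,n)+2^{p-1}\sqrt{ke^{-cn}/N}$.

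For $p>1$ your split has a genuine gap. The ``stochastic piece'' $\frac{1}{k}\sum_j (T_j^\star-\mb E T_j^\star)/\wh\sigma_j^{p-1}$ is a sum of independent terms, but they are \emph{not} centered: since $T_j^\star$ and $\wh\sigma_j$ come from the same block, $\mb E\bigl[(T_j^\star-\mb E T_j^\star)\,\wh\sigma_j^{-(p-1)}\I\{\m O_j\}\bigr]$ equals a covariance that does not vanish. A Hoeffding/Chernoff argument then only concentrates the sum around this nonzero mean, and to absorb it you would need to show that mean is $O(\phi(\delta,n))$—which is exactly the paper's bias bound for the full product, making your decomposition circular. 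The clean route is to center the whole product from the start and invoke Lemma~\ref{lem:concentration2} as stated; your version is correct only when $p=1$, where the two decompositions coincide.

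For the bias itself, your heuristic (``the leading term $(\bar\mu_j-\mu)/\sigma$ has mean zero; the rate comes from the correction'') is right in spirit, but Edgeworth/Berry--Esseen for $\delta=2$ and ``uniform integrability of $\sqrt n(\bar\mu_j-\mu)$'' for $\delta<2$ is not how the paper proceeds, and it is unclear that uniform integrability alone delivers $o(n^{-\delta/2})$. The paper's argument (Lemma~\ref{lem:bias}, then Lemma~\ref{lem:bias__variance_student}) is an explicit computation: write $\mb E\bigl[(\bar\mu_j-\mu)V_j^{-p}\I\{V_j\ge\sigma/2\}\bigr]=n^{p/2}\,\mb E\bigl[X_1(X_1^2+Z^2)^{-p/2}\I\{\cdot\}\bigr]$ with $Z^2=\sum_{i\ge 2}X_i^2$, truncate $|X_1|\le\alpha_n\sqrt n$ with $\alpha_n\to 0$ chosen via the tail function $g(u)=\mb E\l(|X|^{1+\delta}\I\{|X|\ge u\}\r)$, and expand $(X_1^2+Z^2)^{-p/2}$ around $Z^{-p}$ so that the centered $X_1$ picks up an extra factor $X_1^2/Z^2\lesssim\alpha_n^2$. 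That truncation-plus-expansion is where the $o(n^{-\delta/2})$ (resp.\ $O(n^{-1})$) actually comes from; you should plan to reproduce it rather than rely on a CLT-level heuristic.
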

\noindent Combination of Theorem \ref{thm:deviation} with Lemma \ref{lem:denominator} readily implies that $\wh\mu_{N,p}$ admits sub-Gaussian deviation guarantees for $s = k \lesssim \sqrt{N/\log{N}}$. Indeed, in that case we get with probability at least $1-3e^{-k}$ that 
\be
|\wh{\mu}_{N,p} - \mu| \leq C'_p\sigma  \sqrt{\frac{k+1}{N}} .
\ee
As we explain in the remark below, if $k$ is chosen appropriately, this statement can often be strengthened to yield uniform deviation guarantees holding in the range $0 \leq s \leq k$. 
\begin{remark}
Dependence of the constant $c$ on $\zeta(X)$ is inherited from Lemma \ref{lem:sns-2}. The constant $\zeta(X)$ can be arbitrary large, therefore the inequality of Theorem \ref{thm:deviation} does not hold with overwhelming probability uniformly over the class of distributions $\m P_{2,\sigma}$. To achieve uniformity, we need to assume slightly more about the distribution of $X$ -- for example, one may impose the ``small ball'' condition $Q(u):=\pr{|X-\mb EX|\geq u}\geq \tilde c>0$, or the equivalence of  moments of order $2$ and $2+\delta$ for some $\delta>0$, namely that $\mathbb E|X-\mathbb EX|^{2+\delta}\leq \tilde C (\mathbb E|X-\mathbb EX|^2)^{1+\delta/2}$ for some fixed $\tilde C>0$. Then our bounds will depend on the constant $\tilde c$ or $\tilde C$ instead, and dependence on $\zeta(X)$ can be suppressed: for instance, when the moments of order $2$ and $2+\delta$ are equivalent, we have that $\mathbf{E}(|X-\mu|^2\mathbf{1}\{|X-\mu|\geq \sigma(2 C)^{1/\delta} \}) \leq \sigma^2/2$ in view of Markov's inequality, thus $\zeta(X) \leq (2C)^{1/\delta}$. This justifies the claim that assuming $\zeta(X)$ to be ``small'' is a relatively mild requirement. In simple terms, we ask that the distributions in question assign non-trivial mass to a fixed neighborhood of their means. 
It is also interesting to take a viewpoint that assumes the distribution of $X$ to be fixed while the parameters $n,k\to\infty$: in this case, one can establish stronger claims about mean estimation -- for instance, the deviations in \eqref{eq:deviations} can be shown to be uniform over a range of values of parameter $s$. 
\end{remark}
\begin{remark}
\label{remark:1}
A more precise bound for the ``bias term'' $\phi(\delta,n)$ has the form 
\[
\phi(\delta,n) = n^{-\delta/2}\cdot\l( n^{-\frac{2-\delta}{4}} \vee g^{\frac{2-\delta}{2+\delta}}(n^{1/4})\r),
\]
where $g(u) = \mb E \l(|X-\mu|^{1+\delta}\mathbf{1}_{\{|X-\mu|\geq u\}} \r)$. 
It is therefore easy to see that whenever $k = o\l( N^{\frac{\delta-1}{\delta}}\r)$, the term $\phi(\delta,n)$ is $o(N^{-1/2})$ 
and the sub-Gaussian deviation guarantees \eqref{eq:sub-Gauss} hold uniformly over $s\lesssim k$ (the latter restriction appears due to the fact that the probability of event $\Ep$ depends on $k$ as $e^{-ck}$). 

In the case when $\delta=1$, $\phi(\delta,n) = o\l( \sqrt{\frac{k}{N}}\r)$ so that sub-Gaussian deviation guarantees hold with $s=k\lesssim n$. However, if $k$ is large enough, namely, if $k\l( n^{-\frac{1}{4}} \vee g^{\frac{1}{3}}(n^{1/4})\r)^2 = O(1)$, we can still achieve the situation when $\phi(\delta,n)=O\l(N^{-1/2}\r)$. In this case,  deviation guarantees hold uniformly over $s\lesssim k$. 
The price that we have pay however is the fact that $k$ can grow arbitrarily slowly as a function of $N$, but this is unavoidable in general as shown in \cite{devroye2016sub}.  
\end{remark}
 Next, we discuss the more general contamination framework described in the introduction. For each block $G_j$, we denote by $W_j$ the number of outliers in $G_j$ and by $\bar{\mu}_{j}^{I}$ (respectively $\bar{\mu}_{j}^{C}$) the sample mean corresponding to the inliers (respectively outliers) within $G_j$. For every set of outliers $\mathcal{O}$ we define
 \ben\label{eq:outlier_alpha}
 \alpha(\mathcal{O}):= 1 + \underset{j : W_j \neq 0}{\min}\frac{W_j(\bar{\mu}_{j}^I - \bar{\mu}_{j}^C)^2}{n\sigma^2}.
 \een
Informally speaking, $\alpha(\mathcal{O})$ can be viewed as a proxy for the magnitude of the outliers. The following extension of Theorem \ref{thm:deviation} holds.
\begin{theorem}
\label{thm:outlier}
Suppose that $\mb E|X-\mu|^{1+\delta}<\infty$ for some $1\leq \delta \leq 2$, and $O\leq Ck$ for some $C<1$. 
Then with probability at least $1-2e^{-s} - ke^{-cn} - \Pr(\Ep^{c})$,
\be
|\wh\mu_{N,p} - \mu| \leq \frac{C_p\sigma}{(1-C)^p} \l( \sqrt{\frac{s+1}{N}} + \phi(\delta,n)+\alpha(\m O)^{-(p-1)/2} \frac{ O}{k\sqrt{n}} \r) 
\ee
for $c>0$ depending only on $\zeta(X)$, $C_p>0$ depending only on $p$, $\alpha(\mathcal{O})$ defined via \eqref{eq:outlier_alpha}, and $\phi(\delta,n)$ defined as in Theorem \ref{thm:deviation}. 
\end{theorem}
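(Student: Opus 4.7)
The plan is to reduce Theorem~\ref{thm:outlier} to Theorem~\ref{thm:deviation} by splitting the estimation error into an inlier part $A$ and an outlier part $B$, and to bound the outlier part via the sample-variance ANOVA identity, which ties the inflation of $\wh\sigma_j$ on contaminated blocks to the quantity $\alpha(\m O)$ appearing in the statement.

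First, let $I_j$ (resp.\ $C_j$) denote the inlier (resp.\ outlier) indices in $G_j$, and write $\bar\mu_j^I,\bar\mu_j^C$ for the corresponding sub-block means (with the convention $\bar\mu_j^I:=\bar\mu_j$ on clean blocks). The identity $\bar\mu_j=\frac{n-W_j}{n}\bar\mu_j^I+\frac{W_j}{n}\bar\mu_j^C$ gives the decomposition
\begin{equation*}
\wh\mu_{N,p}-\mu \;=\; \underbrace{\sum_{j=1}^k\alpha_j\bigl(\bar\mu_j^I-\mu\bigr)}_{=:A} \;+\; \underbrace{\sum_{j:W_j\geq 1}\alpha_j\,\frac{W_j}{n}\bigl(\bar\mu_j^C-\bar\mu_j^I\bigr)}_{=:B}.
\end{equation*}
The ANOVA identity for the block sample variance reads
\begin{equation*}
\wh\sigma_j^2=\frac{n-W_j}{n}\wh\sigma_{j,I}^2+\frac{W_j}{n}\wh\sigma_{j,C}^2+\frac{W_j(n-W_j)}{n^2}\bigl(\bar\mu_j^I-\bar\mu_j^C\bigr)^2,
\end{equation*}
and, for every block with $W_j\leq n/2$, yields both the comparison $\wh\sigma_j\geq \wh\sigma_{j,I}/\sqrt 2$ and the deterministic bound $\frac{W_j}{n}|\bar\mu_j^C-\bar\mu_j^I|\leq \sqrt{2W_j/n}\,\wh\sigma_j$.

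For the inlier part $A$, I would rerun the proof of Theorem~\ref{thm:deviation} with $\bar\mu_j^I$ in place of $\bar\mu_j$. The inlier subsample of each non-extreme block has size at least $n/2$, so the self-normalised bounds of Section~3.1 apply to $(\bar\mu_j^I-\mu)/\wh\sigma_{j,I}$; the comparison $\wh\sigma_j\geq \wh\sigma_{j,I}/\sqrt 2$ then lets me replace $\wh\sigma_{j,I}$ by the weight denominator $\wh\sigma_j$ at the cost of a constant, and the lower bound $\sum_j\wh\sigma_j^{-p}\geq k((1-C)/(4\sigma))^p$ from $\Ep$ controls the denominator of $\alpha_j$. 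This delivers the first two summands of the bound with the $(1-C)^{-p}$ loss. For the outlier part $B$, the definition of $\alpha(\m O)$ forces $W_j(\bar\mu_j^I-\bar\mu_j^C)^2\geq n\sigma^2(\alpha(\m O)-1)$ on every contaminated block, so the ANOVA identity combined with Lemma~\ref{lem:sns-2} applied to the inlier subsample yields $\wh\sigma_j\gtrsim \sigma\sqrt{\alpha(\m O)}$. Substituting this and the deterministic bound above bounds each summand of $B$ by a multiple of $\sigma^{1-p}\alpha(\m O)^{-(p-1)/2}\sqrt{W_j/n}/(kS_p)$, where $S_p:=\frac{1}{k}\sum_j\wh\sigma_j^{-p}$; invoking $\Ep$ and summing with the Cauchy-Schwarz estimate $\sum_{j:W_j\geq 1}\sqrt{W_j}\leq \m O$ produces the announced $\frac{\sigma}{(1-C)^p}\alpha(\m O)^{-(p-1)/2}\frac{\m O}{k\sqrt n}$ contribution.

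The most delicate point is that the weights $\alpha_j$ are constructed from the contaminated $\wh\sigma_j$'s, not from the inlier-only variances, so the analysis of $A$ cannot be cleanly conditioned on the locations of the outliers; one has to track how the inflated $\wh\sigma_j$'s on contaminated blocks propagate through the random weights into the clean-block summands. The ANOVA comparison $\wh\sigma_j\geq \wh\sigma_{j,I}/\sqrt 2$ supplies exactly what is needed for normal-sized contaminated blocks, while the extreme blocks with $W_j>n/2$ number at most $2\m O/n$ and can be absorbed into $B$ via the crude bound $\alpha_j\leq (kS_p\wh\sigma_j^p)^{-1}$ together with the $\alpha(\m O)$-based lower bound on $\wh\sigma_j$.
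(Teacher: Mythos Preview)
Your approach is essentially the paper's: the same mean/variance decomposition via the ANOVA identity, the same comparison $\wh\sigma_j\geq\wh\sigma_{j,I}/\sqrt 2$ to transfer the self-normalized sub-Gaussian bounds to the contaminated weights, the same use of $\Ep$ to control the denominator, and the same Cauchy--Schwarz step $\sum_{j:W_j\geq 1}\sqrt{W_j}\leq\m O$ for the outlier term. The paper organizes the inlier part into a deviation piece and a bias piece (its terms (B) and (C)); on contaminated blocks the bias $\mathbb E\bigl[(\bar\mu_j^I-\mu)\wh\sigma_j^{-p}\I\{\m O_j\}\bigr]$ cannot be controlled by $\phi(\delta,n)$ directly (since $\wh\sigma_j$ depends on the outliers), so the paper bounds it crudely by $\mathbb E|T_j^I|\I\{\m O_j\}\lesssim n^{-1/2}$, producing an additional $\alpha(\m O)^{-(p-1)/2}\m O/(k\sqrt n)$ contribution that you should make explicit rather than hide inside ``rerun Theorem~\ref{thm:deviation}.''

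The one substantive divergence is your treatment of blocks with $W_j>n/2$. The paper does \emph{not} handle these deterministically: it invokes a Chernoff bound for negatively associated indicators (citing \cite{doerr2020probabilistic}) to show that, under the contamination model of Section~\ref{sec:intro} with $\m O\leq N/4$, the event $\mathcal E_2=\bigcap_j\{W_j\leq n/2\}$ holds with probability at least $1-ke^{-cn}$, and then works entirely on $\mathcal E_2$. Your deterministic absorption argument is shakier than you suggest: on a block with $W_j$ close to $n$ the ANOVA lower bound degenerates to $\wh\sigma_j^2\geq\frac{n-W_j}{n}\sigma^2(\alpha(\m O)-1)$, so the ``$\alpha(\m O)$-based lower bound on $\wh\sigma_j$'' you rely on need not hold uniformly, and the inlier self-normalized bounds (Lemma~\ref{lem:sns-2}, inequality~\eqref{eq:sns-5}) are stated for subsamples of size $n$, not $n-W_j$. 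Replacing your last paragraph by the paper's probabilistic reduction to $\mathcal E_2$ closes this gap cleanly.
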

One may notice that $\alpha(\m O)^{-(p-1)/2} \leq 1$, and this quantity gets smaller as $p$ grows, suggesting that the estimator $\wh{\mu}_{N,p}$ is more robust to the outliers of large magnitude as $p$ increases. 
Next, let us discuss the term $\frac{ O}{k\sqrt n} = \eps \sqrt{n}$ that quantifies dependence of the estimation error on the fraction  of outliers $\eps = \frac{ O}{N}$. It is easy to see that the ``best'' choice of $k$ for which the terms $\phi(\delta,n)$ and $\frac{O}{k\sqrt n}$ are of the same order is $k\propto N \eps^{\frac{2}{1+\delta}}$ yielding the error rate of $\eps^{\frac{\delta}{1+\delta}}$ that is known to be optimal with respect to $\delta$ (e.g. see section 1.2 in \cite{steinhardt2017resilience} or Lemma 5.4 in \cite{minsker2018uniform}). 
However, as the upper bound depends explicitly on the magnitude of outliers through $\alpha(\m O)$, in some scenarios it can be much smaller than the worst case given by $\frac{O}{k\sqrt{n}}$. 

\subsection{Asymptotic efficiency.}

The following result establishes asymptotic efficiency (in a sense defined in section \ref{sec:intro}) of the estimator $\wh{\mu}_{N,p}$ for any $p\geq 1$ in the absence of outliers, implying that the estimator can not be uniformly improved in general.
\begin{theorem}
\label{thm:clt}
Suppose that $\mb E|X-\mu|^{1+\delta} < \infty$ for some $1\leq \delta \leq 2$. 
Let $\{k_j\}_{j\geq 1}\subset \mb N$, $\{n_j\}_{j\geq 1}\subset \mb N$ be two non-decreasing, unbounded sequences satisfying $\sqrt{N_j}\phi(\delta,n_j) = o(1)$ as $j\to\infty$, where $N_j:=k_j n_j$ and $\phi(\delta,n)$ was defined in remark \ref{remark:1}. 
Then for any $p\geq 1$,
\[
\sqrt{N_j}\l(\wh\mu_{N_j,p} - \mu \r) \xrightarrow{d} \mathcal{N}\l(0,\sigma^2\r) \text{ as } j\to\infty.
\]
\end{theorem}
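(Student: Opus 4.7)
The plan is to reduce the theorem to the classical CLT by proving
\[
\sqrt{N_j}\bigl(\wh\mu_{N_j,p}-\mu\bigr) = \sqrt{N_j}\bigl(\bar X_{N_j}-\mu\bigr) + o_P(1), \qquad \bar X_N := \tfrac{1}{N}\sum_{i=1}^N X_i.
\]
Since $\sqrt{N_j}(\bar X_{N_j}-\mu)\xrightarrow{d}\mathcal N(0,\sigma^2)$ by the classical CLT (which only requires $\mathrm{Var}(X)=\sigma^2<\infty$), Slutsky's theorem will then deliver the desired conclusion.

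To implement this, I use that $\bar X_N = k^{-1}\sum_{j=1}^k \bar\mu_j$ (since all blocks have size $n$) and that the weights can be rewritten as $\alpha_j = \wh{\bar\sigma}_k^p/(k\wh\sigma_j^p)$, where $\wh{\bar\sigma}_k^p := \bigl(\tfrac{1}{k}\sum_{i=1}^k \wh\sigma_i^{-p}\bigr)^{-1}$ is the harmonic mean of the $\wh\sigma_j^p$'s. A short algebraic manipulation yields
\[
\sqrt{N}\bigl(\wh\mu_{N,p} - \bar X_N\bigr) = \frac{1}{\sqrt k}\sum_{j=1}^k \Bigl(\frac{\wh{\bar\sigma}_k^p}{\wh\sigma_j^p} - 1\Bigr)\, Z_j, \qquad Z_j := \sqrt n\,(\bar\mu_j-\mu).
\]
Splitting $\frac{\wh{\bar\sigma}_k^p}{\wh\sigma_j^p} - 1 = \frac{\wh{\bar\sigma}_k^p - \sigma^p}{\sigma^p} + \wh{\bar\sigma}_k^p\bigl(\frac{1}{\wh\sigma_j^p} - \frac{1}{\sigma^p}\bigr)$ and noting that $\tfrac{1}{k}\sum_j \wh\sigma_j^{-p}\xrightarrow{P}\sigma^{-p}$ (WLLN after truncating on the event $V_j\ge\sigma/2$, which has probability at least $1 - e^{-cn}$ by Lemma \ref{lem:sns-2}), the first summand pulls out a factor $(\wh{\bar\sigma}_k^p-\sigma^p)/\sigma^p = o_P(1)$ multiplying $\sqrt N(\bar X_N-\mu)=O_P(1)$. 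The proof therefore reduces to showing that $S_k := \frac{1}{\sqrt k}\sum_{j=1}^k W_j\xrightarrow{P}0$, where $W_j := \bigl(\frac{1}{\wh\sigma_j^p}-\frac{1}{\sigma^p}\bigr)Z_j$ are i.i.d.\ across $j$.

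By Chebyshev's inequality, the latter breaks into two tasks: $\mathrm{Var}(W_1)\to 0$ and $\sqrt{k}\,|\mathbb E W_1|\to 0$. For the variance, I truncate on the event $\mathcal A := \{V_1\ge\sigma/2,\ |\bar\mu_1-\mu|\le\sigma/3\}$, whose complement has probability at most $e^{-cn}$ by Lemma \ref{lem:sns-2} combined with Chebyshev applied to $\bar\mu_1$. On $\mathcal A$, the identity $\wh\sigma_1^2 = V_1^2-(\bar\mu_1-\mu)^2$ keeps $\wh\sigma_1^2$ bounded away from $0$, so $1/\wh\sigma_1^p - 1/\sigma^p$ is bounded and converges to $0$ in probability; coupled with the sub-Gaussian concentration of the truncated t-statistic recalled in the preliminaries (cf.\ \eqref{eq:sns-5}) and $\mathbb E Z_1^2=\sigma^2$, this delivers $\mathbb E[W_1^2\mathbf{1}_{\mathcal A}]\to 0$ via uniform integrability, while the contribution from $\mathcal A^c$ is exponentially negligible.

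The genuinely hard step, and the main obstacle I anticipate, is the bias $\sqrt{k}\,|\mathbb E W_1| = \sqrt{kn}\,|\mathrm{Cov}(1/\wh\sigma_1^p,\bar\mu_1-\mu)|$. I attack it by expanding $x\mapsto x^{-p/2}$ to first order around $\sigma^2$ on $\mathcal A$, which reduces the leading contribution to the cross-moment $\mathbb E[(\wh\sigma_1^2-\sigma^2)(\bar\mu_1-\mu)]$. This quantity can be bounded under the $(1+\delta)$-moment assumption by the same truncation/Rosenthal-type arguments that produce the bias term $\phi(\delta,n)$ in Theorem \ref{thm:deviation}, yielding $|\mathbb E W_1|\lesssim \sigma\sqrt{n}\,\phi(\delta,n)$. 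The hypothesis $\sqrt{N_j}\phi(\delta,n_j)=o(1)$ is precisely calibrated to kill this bias, since $\sqrt{k_j}\,|\mathbb E W_1|\lesssim \sqrt{N_j}\phi(\delta,n_j)=o(1)$, which finishes the proof.
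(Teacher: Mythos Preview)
Your overall strategy---couple $\wh\mu_{N,p}$ to the grand sample mean $\bar X_N$ and invoke the classical CLT---is a genuinely different route from the paper's proof. The paper never compares $\wh\mu_{N,p}$ to $\bar X_N$; instead it restricts to the event $\mathcal E=\bigcap_j\{|Q_j|\le 1/2,\,V_j\ge\sigma/2\}$, writes the estimator as the ratio of a numerator $\tfrac{1}{k}\sum_j \tfrac{T_j}{\wh\sigma_j^{p-1}}\mathbf 1\{\mathcal O_j\}$ over the harmonic-mean denominator, applies the Lindeberg CLT directly to the centered numerator (the truncated summands are bounded, so Lindeberg is immediate), and handles the denominator and bias via Lemmas~\ref{lem:asymptotic-variance} and~\ref{lem:bias__variance_student}. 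Your coupling approach, done correctly, would also work and is conceptually clean, but the paper's route sidesteps the need to control the \emph{variance} of the correction term---it only needs its mean (the bias) and boundedness.

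That said, your execution has two concrete gaps. First, the claim $\Pr(\mathcal A^c)\le e^{-cn}$ is false: Chebyshev on $\bar\mu_1$ yields only $\Pr(|\bar\mu_1-\mu|>\sigma/3)\le 9/n$, not an exponential bound. This is exactly why the paper truncates on the self-normalized event $\{|Q_1|\le 1/2\}$ instead of $\{|\bar\mu_1-\mu|\le\sigma/3\}$---the former \emph{does} enjoy exponentially small complement via \eqref{eq:sns-6}. Second, and more seriously, the random variable $W_1=(\wh\sigma_1^{-p}-\sigma^{-p})Z_1$ need not have finite second moment: since $\wh\sigma_1^2=V_1^2(1-Q_1^2)$, the event $\{V_1\ge\sigma/2\}$ alone does not bound $\wh\sigma_1$ away from zero, and $|Q_1|$ can be arbitrarily close to $1$. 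Hence your Chebyshev step and your WLLN claim for $\tfrac1k\sum_j\wh\sigma_j^{-p}$ are not justified as written. Both issues are repaired by replacing $\mathcal A$ with $\mathcal O_1=\{|Q_1|\le 1/2,\,V_1\ge\sigma/2\}$ and working on $\bigcap_j\mathcal O_j$ throughout; once you do that, the bias step you sketch becomes precisely the content of the paper's Lemma~\ref{lem:bias__variance_student}, and the variance step becomes straightforward since the truncated summands are uniformly bounded.
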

Condition $\sqrt{N_j}\phi(\delta,n_j) = o(1)$ is essentially a requirement that the bias of estimator $\wh{\mu}_{N_j,p}$ is asymptotically of order $o\l( N_j^{-1/2}\r)$. 
It is not difficult to check that the sequences $\{k_j\}_{j\geq 1}, \ \{n_j\}_{j\geq 1}$ with required properties exist for any distribution $P\in \m P_{2,\sigma}$, see remark \ref{remark:1} for the details. 
For example, if $\mb E|X-\mu|^3<\infty$, it suffices to require that $k_j = o(n_j)$. 

Together, results of section \ref{sec:main} imply that the estimator $\wh\mu_{N,p}$ can be viewed as a true robust alternative to the sample mean -- it preserves its desirable properties such as asymptotic efficiency while being robust at the same time.

\subsection{Comparison with existing techniques.}
\label{sec:comparison}

One of the most well-known consistent, robust estimators of the mean in the class $\m P_{2,\sigma}$ is the median-of-means estimator \cite{Nemirovski1983Problem-complex00,alon1996space,lerasle2011robust}. 
While it is robust to heavy tails, adversarial contamination, and is tuning-free, it is not asymptotically efficient: indeed, according to Theorem 4 in \cite{minsker2019distributed}, the asymptotic variance of the median-of-means estimator is $\frac{\pi}{2}\sigma^2$. This fact is illustrated in our numerical experiments in section \ref{sec:simul}. 
Another family of estimators belonging to the broad class defined via equation \eqref{eq:estimator} is discussed in section 2.4 in \cite{minsker2019distributed} and is defined via 
\[
\wt \mu_N = \argmin_{z\in \mb R} \sum_{j=1}^k \rho\l( \frac{\sqrt{n}}{\Delta}(\bar \mu_j - z)\r)
\]
where $\rho$ is Huber's loss $\rho(z) = \min\l( \frac{z^2}{2}, |z| - \frac{1}{2}\r)$ and $\Delta>0$. The asymptotic variance of this estimator can be made arbitrarily close to $\sigma^2$, however, achieving this requires $\sigma^2$ to be known. 

Construction of Catoni's estimator \cite{catoni2012challenging} again requires knowledge of $\sigma^2$ (or its tight upper bound), moreover, it is not robust to adversarial contamination. 
Finally, deviation bounds for the trimmed mean estimator obtained in \cite{lugosi2019robust} are not uniform with respect to the confidence parameter $s$ (meaning that different choices of $s$ require the estimator to be re-computed), and its asymptotic efficiency, while plausible, has not been formally established. Moreover, construction employed in \cite{lugosi2019robust} requires sample splitting. Recently, Lee and Valiant \cite{lee2020optimal} showed that it is possible to construct a mean estimator that achieves sub-Gaussian guarantees with essentially optimal constants, however, their estimator explicitly depends on the desired confidence level, and its asymptotic behavior is not discussed.

The only other robust, tuning free estimator that is asymptotically efficient, albeit only for a \emph{subclass} of $\m P_{2,\sigma}$, is a permutation-invariant version of the median-of-means estimator (which is also the higher order Hodges-Lehmann estimator). 
It is defined as follows: let $\m A_N^{(n)}:=\l\{  J: \ J\subseteq \{1,\ldots,N\}, \card(J)=n\r\}$ be a collection of all distinct subsets of $\{1,\ldots,N\}$ of cardinality $n$,  $\bar\theta_J:=\frac{1}{n}\sum_{j\in J} X_j$, and 
\(
\wt \mu_U:=\med\l(\bar\theta_J, \ J\in \m A_N^{(n)}\r).
\) 
We note that $\card\l( \m A_N^{(n)}\r) = {N\choose n}$, so that for large $N$ and $n$ exact evaluation of $\wt \mu_U$ is not computationally feasible. 
The following result was established recently in \cite{diciccio2020clt}: assume that $N_j = n_j k_j$ is the sample size where $n_j, \ k_j\to\infty$ as $j\to\infty$ such that $n_j = o\l(\sqrt{N_j}\r)$. Moreover, suppose that $X$ is normally distributed with mean $\mu$ and variance $\sigma^2$. Then 
$\sqrt{N}\l( \wt \mu_U - \mu \r)\xrightarrow{d} \m N\l( 0,\sigma^2 \r)$. 
While is likely that the result still holds for other symmetric distributions, the condition $n_j = o\l(\sqrt{N_j}\r)$ is restrictive: for example, for non-symmetric distribution possessing 3 finite moments, the bias of the estimator $\wt \mu_U$ is of order $n_j^{-1}$, and the requirement $n_j = o\l(\sqrt{N_j}\r)$ implies that this bias is asymptotically larger than $N_j^{-1/2}$. 

Finally, there is a growing body of literature related to sub-Gaussian mean estimators in $\mb R^d$, for example see the papers \cite{dalalyan2020all,lugosi2019mean,hopkins2020mean}, and references therein. These works are mainly concerned with rate optimality, and questions related to asymptotic efficiency have not been investigated in detail. 

\section{Adaptation to the contamination proportion $\eps$.}
\label{sec:adaptation}

The number of outliers $O$ is usually unknown in practice, therefore, it is desirable to have a procedure that can adapt to this unknown quantity. Fortunately, the proposed method admits a natural adaptive version. 
This extension is based on the following observation: assume that $p=1$, and consider the estimation error
$\wh\mu_{N,1} - \mu = \frac{\frac{1}{k}\sum_{j=1}^k \frac{\bar\mu_j-\mu}{\wh\sigma_j}}{\frac{1}{k}\sum_{j=1}^k \frac{1}{\wh\sigma_j}}$. Then the numerator of this expression admits an upper bound 
$\l| \frac{1}{k}\sum_{j=1}^k \frac{\bar\mu_j-\mu}{\wh\sigma_j} \r| \leq C_p\sigma \l( \sqrt{\frac{s}{N}} + \sqrt{\frac{O}{N}}\r)$ that holds \emph{for all} choices of $k$ with probability at least $1- 2e^{-s} - k e^{-cn}$, as shown in the proof of Theorem \ref{thm:outlier}. Therefore, it suffices to choose $k$ such that the harmonic mean 
$\frac{k}{\sum_{j=1}^k \frac{1}{\wh\sigma_j}}$ is a good, in a relative sense, estimator of $\sigma$. Fortunately, the harmonic mean of standard deviations is a fully data-dependent quantity that can be evaluated for any $k$; similar intuition holds for other values of $p$ as well. 

Based on the previous observation, we propose an adaptive estimator $\wt{\mu}_p$ defined as follows. 
We will choose $k$ as the smallest integer, on a logarithmic scale, which guarantees that $\frac{k}{\sum_{j=1}^k \frac{1}{\wh\sigma^p_j}}$ is not too large compared to $\sigma^p$, in a sense defined by \eqref{eq:Ep}. 
To this end, we only need to obtain a good preliminary estimator of $\sigma$ that we can compare the harmonic means to. Assume that we are already given an estimator $\wt{\sigma}$ such that
\ben\label{eq:variance_est}
 1/20 \leq \frac{\wt{\sigma}}{\sigma} \leq 4
\een
with large probability. The above assumption is not restrictive since, as we will show in section \ref{app:robust_variance}, one can construct $\wt \sigma$ such that \eqref{eq:variance_est} holds with probability at least $1-e^{-cN}$ for some absolute $c>0$, under mild conditions. Next, for each positive integer $k$, set
\be
\wt{\Ep}(k) := \l\{\l(\frac{1}{k}\sum_{j=1}^k \frac{1}{\wh \sigma^p_j}\r)^{-1}  \leq \l(\frac{80\wt{\sigma}}{1-C}\r)^{p}\r\}.
\ee
Finally, define $\tilde{k}$ via
\(
\log_2{\tilde{k}} := \inf\l\{ i \in \{1,\dots,\lfloor\log_2{N}\rfloor\} \,:\, \wt{\Ep}(2^i) \text{ holds }  \r\} \vee 1,
\) 
\footnote{We assume that the infimum over the empty set is equal to $-\infty$.}
and the corresponding estimator $\wt{\mu}_p(s) := \wh\mu_{N,p}(\tilde{k} \vee \lfloor s \rfloor+1)$. The following bound is the main result of this section; essentially, it states that $\wt \mu_p(s)$ is a robust estimator that is fully adaptive and provides sub-Gaussian deviation guarantees.  
\begin{theorem}
\label{thm:outlier_adaptive}
Suppose that $\mb E|X-\mu|^{2}<\infty$. Assume that $2 \leq O \leq N/4$ and that $\wt{\sigma}$ satisfies \eqref{eq:variance_est}. Then with probability at least $1-2\log_{2}{(3 O)}e^{-s} -  (O\vee s)e^{-cN/(O\vee s)}$,
\be
|\wt{\mu}_p(s) - \mu| \leq C_p\sigma \l( \sqrt{\frac{s}{N}} + \sqrt{\frac{O}{N}}\r),
\ee
where $c>0$ depends only on the distribution of $X$ and $C_p>0$ depends only on $p$.
\end{theorem}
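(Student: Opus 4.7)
The plan is to reduce the adaptive case to the non-adaptive deviation bound of Theorem \ref{thm:outlier} applied at a data-driven $\tilde k$ that is, with high probability, of order $\m O$. The two things to establish are: (i) $\tilde k$ is not too large, specifically bounded by a constant multiple of $\m O$, so that the adaptive choice does not drown the signal in noise; and (ii) the numerator $\frac{1}{k}\sum_{j=1}^k (\bar\mu_j-\mu)/\wh\sigma_j^p$ admits a bound of the form $C_p\sigma(\sqrt{s/N}+\sqrt{\m O/N})$ simultaneously for all dyadic values of $k$ up to this range, so that we may plug in $\tilde k$ after it is identified.

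For (i), let $k^\star$ denote the smallest power of $2$ satisfying $k^\star \geq 3\m O$, so that $k^\star \leq 6\m O$. Condition \eqref{eq:variance_est} on $\wt\sigma$ implies $80\wt\sigma\geq 4\sigma$, hence $\m E_p(k^\star)\subseteq \wt{\m E}_p(k^\star)$. By Lemma \ref{lem:denominator} applied with $C=1/3$ (since $\m O/k^\star\leq 1/3$), $\m E_p(k^\star)$ holds with probability at least $1-e^{-ck^\star}\geq 1-e^{-c\m O}$ (absorbing the ``$\wt\sigma$ is bad'' event $e^{-cN}\leq e^{-c\m O}$ since $\m O\leq N/4$). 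On this event $\tilde k\leq k^\star\leq 6\m O$, and furthermore by the definition of $\wt{\m E}_p(\tilde k)$ the denominator satisfies $\frac{1}{\tilde k}\sum_j 1/\wh\sigma_j^p\geq ((1-C)/(80\wt\sigma))^p\gtrsim \sigma^{-p}$, since $\wt\sigma\leq 4\sigma$.

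For (ii), I would extract from the proof of Theorem \ref{thm:outlier} the intermediate estimate
\[
\l|\frac{1}{k}\sum_{j=1}^k \frac{\bar\mu_j-\mu}{\wh\sigma_j^p}\r| \leq C_p\,\sigma^{1-p}\l(\sqrt{s/N}+\sqrt{\m O/N}\r)
\]
valid, for each fixed dyadic $k$, with probability at least $1-2e^{-s}-ke^{-cn}$; crucially the right-hand side depends on $k$ only through $N$, so a union bound is viable. The union bound is over the $\lfloor\log_2 k^\star\rfloor\leq \log_2(3\m O)+O(1)$ dyadic levels $k=2^i$ with $2\leq 2^i\leq k^\star$, contributing $2\log_2(3\m O)\,e^{-s}$ from the deviation terms. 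The concentration-of-$V_j$ terms aggregate as $\sum_{i} 2^i e^{-cN/2^i}$, a geometric sum dominated by its largest term $k^\star e^{-cN/k^\star}\lesssim \m O\,e^{-cN/\m O}$ (with adjusted constants).

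Putting the two ingredients together: on the intersection of the above events (of total probability $\geq 1-2\log_2(3\m O)e^{-s}-\m O e^{-cN/\m O}-e^{-c\m O}$) the value $\tilde k$ lies among the dyadic levels for which (ii) holds, the numerator is controlled by $C_p\sigma^{1-p}(\sqrt{s/N}+\sqrt{\m O/N})$, and the denominator is lower bounded by a constant multiple of $\sigma^{-p}$; taking the ratio yields the claimed bound. The main obstacle I anticipate is the bookkeeping for (ii): one must verify that the numerator bound derived inside the proof of Theorem \ref{thm:outlier} indeed holds simultaneously over a dyadic grid of $k$'s (rather than only for the $k$ satisfying $k\asymp\m O$), and that the outlier contribution $\sqrt{\m O/N}$ is robust to the choice of $k$ within the range $k\leq 6\m O$ (which follows because the worst-case block contribution $\alpha(\m O)^{-(p-1)/2}\m O/(k\sqrt n)=\alpha(\m O)^{-(p-1)/2}\sqrt{\m O^2/(Nk)}$ is maximized at the smallest $k$ and still bounded by $\sqrt{\m O/N}$ whenever $k\geq \m O$; the sub-$\m O$ range is handled separately, contributing to the $\m O e^{-cN/\m O}$ term).
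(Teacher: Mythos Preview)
Your proposal is correct and follows essentially the same approach as the paper's proof: both bound $\tilde k$ by a constant multiple of $\m O$ via Lemma \ref{lem:denominator} (the paper phrases this contrapositively via the inclusion $\{\tilde k\geq 3\m O\}\subset\{\m E_p(\lfloor 3\m O/2\rfloor)^c\}$, you phrase it directly), control the denominator through the defining event $\wt{\m E}_p(\tilde k)$ together with $\wt\sigma\leq 4\sigma$, and union-bound the numerator estimate from the proof of Theorem \ref{thm:outlier} over the dyadic grid. One small simplification: your separate treatment of the ``sub-$\m O$ range'' is unnecessary, since the intermediate bound \eqref{eq:robust:neat} already carries the factor $\frac{\m O}{k\sqrt n}\wedge\sqrt{\m O/N}\leq\sqrt{\m O/N}$ for every $k$, and the bias term $\phi(1,n)=o(\sqrt{k/N})$ is absorbed into $\sqrt{\m O/N}$ once $k\leq 6\m O$.
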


\section{Numerical simulation results.}
\label{sec:simul}

The goal of this section is to compare performance of the estimators $\wh{\mu}_{N,p}$ for different values of $p\geq 1$, as well as evaluate their performance against the benchmarks given by other popular techniques such as the median-of-means estimator and the ``oracle'' trimmed mean (labeled ``trim'' in the figures) estimator that takes the contamination proportion $\eps$ as its input.  

Our simulation setup was defined as follows: $N=2500$ observations from half-t distribution\footnote{$X$ has half-t distribution with $\nu$ d.f. if $X=|Y|$ where $Y$ has Student's t-distribution with $\nu$ d.f.} with $4$ degrees of freedom (d.f.). This distribution is asymmetric, therefore, results allow us to evaluate the degree to which the bias affects performance of different robust estimators; linear transformation has been applied so that the mean and variance of generated data are $0$ and $1$ respectively. 
Next, $ O\in \{0,50,100,150\}$ randomly selected observations have been replaced by the outliers given by the point mass at $x_0=10^3$; this type of outliers appears to be most challenging for the trimmed mean estimator as it creates bias due to ``inliers'' being removed only from one of the tails of the distribution. 
We compared 4 estimators: the median-of-means (MOM) estimator defined after equation \eqref{eq:estimator}, estimators $\wh{\mu}_{N,1}$ and $\wh{\mu}_{N,2}$ corresponding to the choice of weights \eqref{eq:weights} with $p=1$ and $p=2$, as well as the ``oracle'' trimmed mean estimator \cite{lugosi2019robust} that knows the number of outliers. Specifically, trimmed mean was computed by removing the smallest $\lfloor \eps N \rfloor + 5$ and well as largest $\lfloor \eps N \rfloor + 5$ observations, where $5$ was added to account for the outliers due to the heavy tails, and averaging over the rest.  
Estimators $\wh{\mu}_{N,1}$, $\wh{\mu}_{N,2}$ as well as MOM were evaluated for various values of parameter $k\in\{25,50,75,100,125,150,175,200\}$ that controls the number of subgroups. 

For each combination of values of $ O$ and $k$, simulation was repeated $1000$ times; we present 3 summary statistics in the plots below: the average error (Figure \ref{fig:average}), the standard deviation (Figure \ref{fig:var}) and the maximal (over 1000 repetitions) absolute error (Figure \ref{fig:max}). 
   \begin{figure}[ht]
        \centering
        \includegraphics[scale=0.25]{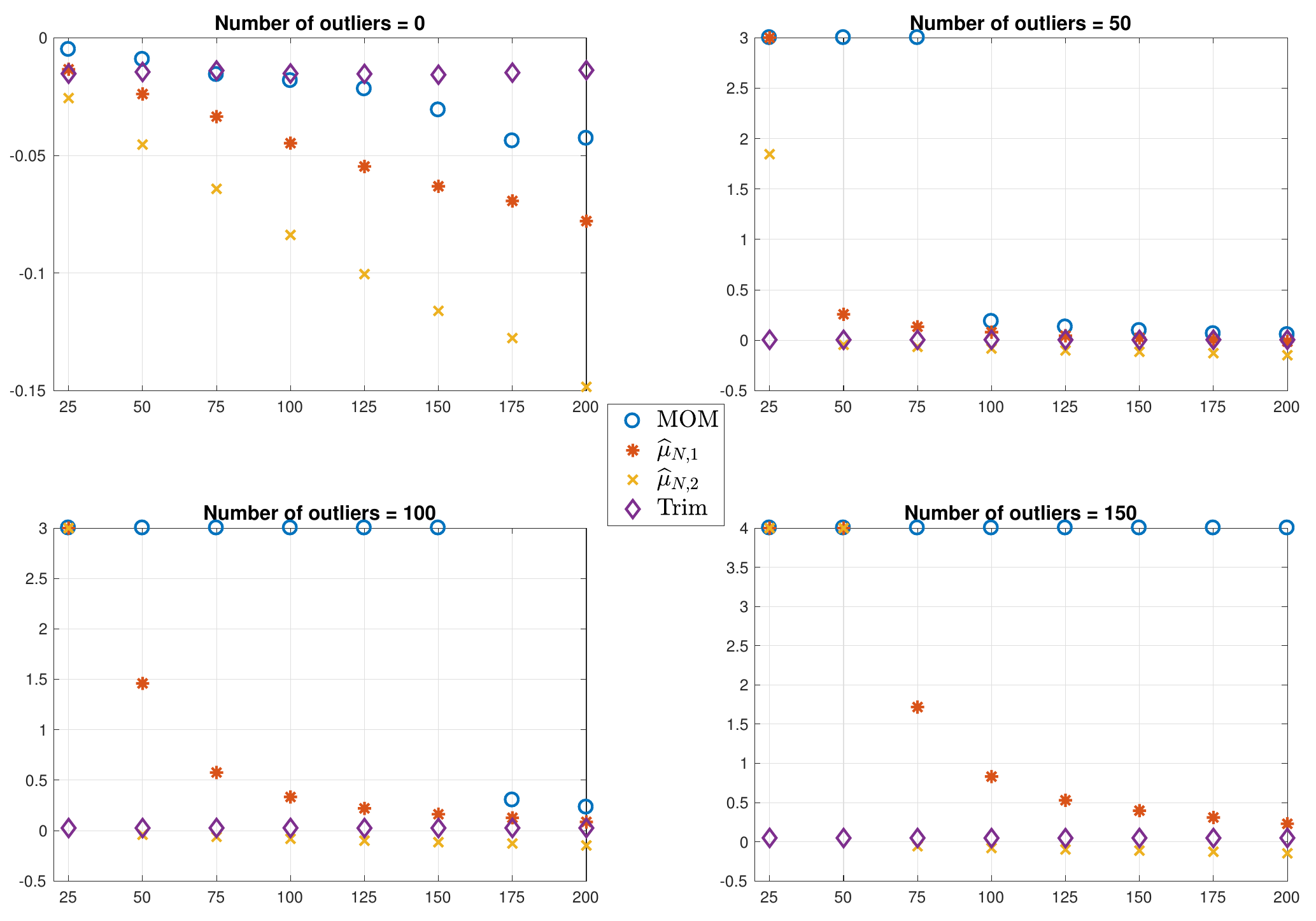}
        \caption{Average estimation error over 1000 runs of the experiment; large values were truncated to show results on appropriate scale.}
           \label{fig:average}
    \end{figure}
   \begin{figure}[ht]
        \centering
        \includegraphics[scale=0.25]{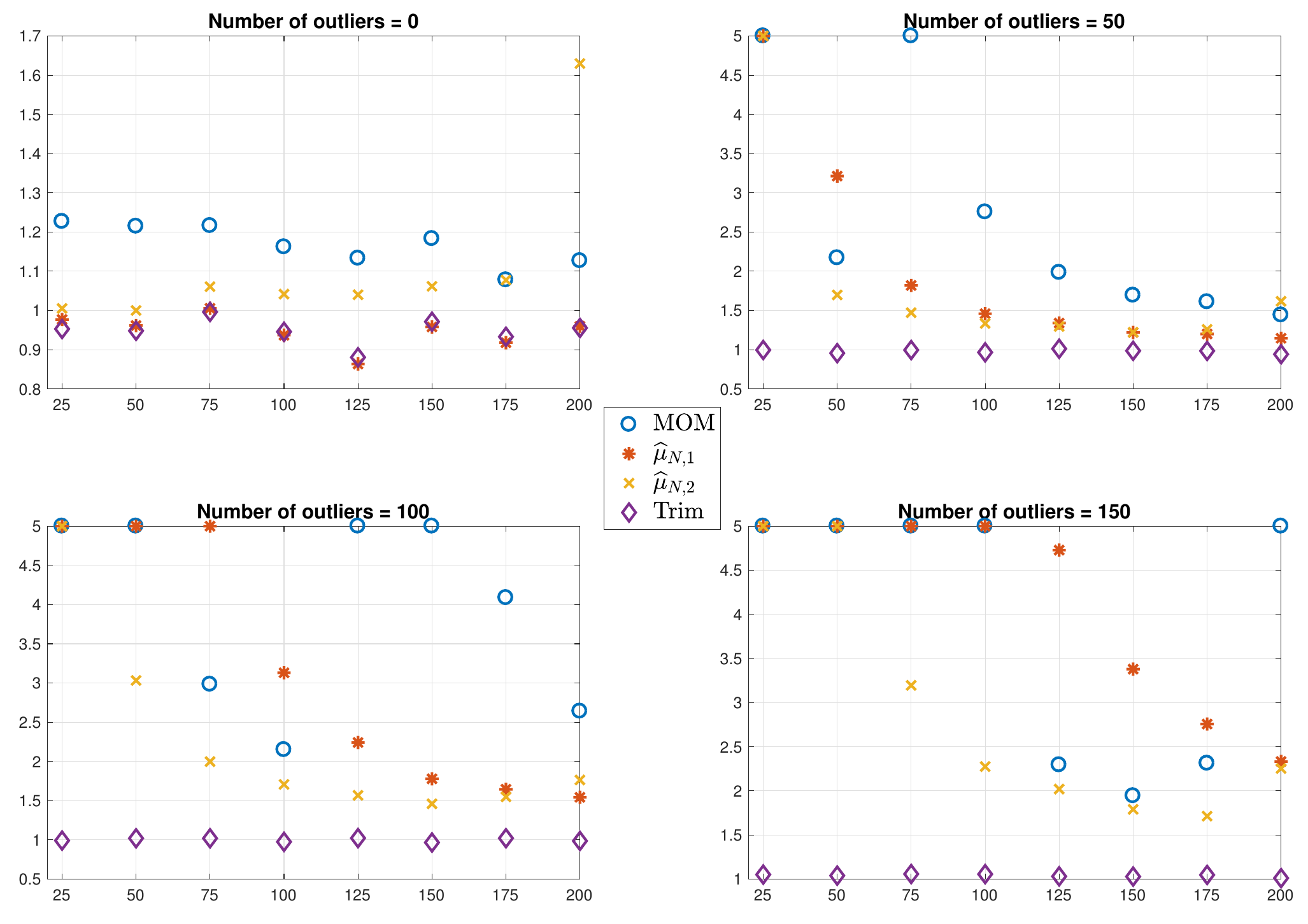}
        \caption{Standard deviation (rescaled by $\sqrt{N}$) over 1000 runs of the experiment; large values were truncated to show results on appropriate scale. }
           \label{fig:var}
    \end{figure}
 \begin{figure}[ht]
        \centering
        \includegraphics[scale=0.25]{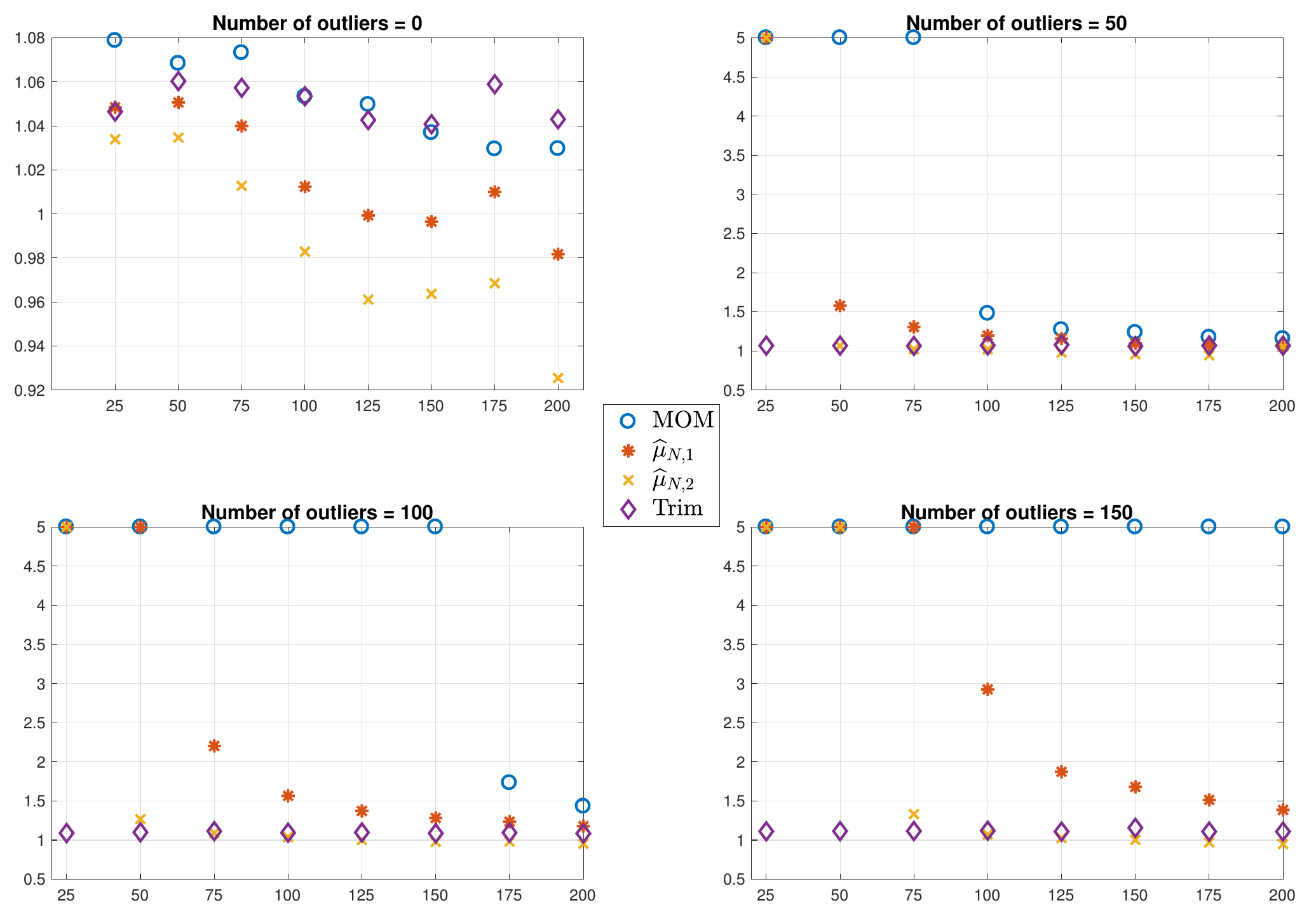}
        \caption{Maximal absolute error over 1000 runs of the experiment; large values were truncated to show results on appropriate scale.}
           \label{fig:max}
    \end{figure}

Overall, numerical experiments confirm our theoretical findings. Here is the summary of our simulation results: 
\begin{enumerate}
\item In the setup with no contamination ($ O=0$), all estimators showed good performance, with $\wh{\mu}_{N,1}$ slightly but consistently beating $\wh{\mu}_{N,2}$ on average, but $\wh{\mu}_{N,2}$ had the smallest maximal error among all estimators; empirical standard deviations of $\wh{\mu}_{N,1}$ and $\wh{\mu}_{N,2}$ were consistent with theory-predicted values;
\item as $ O$ increased, $\wh{\mu}_{N,2}$ was performing better that $\wh{\mu}_{N,1}$, while both estimators were significantly better than MOM;
\item both $\wh{\mu}_{N,1}$ and $\wh{\mu}_{N,2}$ showed consistent performance as the number of blocks $k$ increased; moreover, unlike MOM, the estimators performed well even in the challenging setup where $ O \simeq k$.
\end{enumerate}

\section{Proofs.}
\label{sec:proofs}

This section contains detailed proofs of the main results of the paper.

\subsection{Results related to the deviations of self-normalized sums.}

\subsubsection{Proof of Lemma \ref{lem:sns-2}.}

Let $Z_i:=\frac{X_i -\mu}{\sigma}$, and observe that 
\ml{
\pr{\sum_{i=1}^n Z_{i}^2 \leq \frac{n}{4}}\leq \pr{\sum_{i=1}^n Z_{i}^2 \I\{|Z_{i}| \leq \zeta(X)\} \leq \frac{n}{4}} 
\\
= \pr{\sum_{i=1}^n \l( Z_{i}^2 \I\{|Z_{i}| \leq \zeta(X)\} -\mb EZ_{i}^2 \I\{|Z_{i}| \leq \zeta(X)\}\r) \leq \frac{n}{4} - n H}
\\
\leq \pr{\sum_{i=1}^n \l( Z_{i}^2 I\{|Z_{i}| \leq \zeta(X)\} -\mb EZ_{i}^2 \I\{|Z_{i}| \leq \zeta(X)\}\r) \leq -\frac{n}{4}},
}
where $H=\mb E Z^2 \I\{|Z| \leq \zeta(X)\}$. The last inequality follows from the inequality $H=\mb E Z^2 \I\{|Z| \leq \zeta(X)\} \geq 1/2$ implied by the definition of $\zeta(X)$. The right side of the previous display can be upper bounded via Bernstein's inequality by 
$e^{-\frac{n}{32\l( \zeta^{2}(X)+ \zeta(X)/12 \r)}}$ once we notice that 
\[
\mb E \l(Z^4\mathbf{1}(|Z| \leq \zeta(X) \r) \leq \zeta^{2}(X).
\]
The claim follows from an algebraic inequality $12\zeta^2(X)+ \zeta(X) \leq 15\zeta^2(X) \vee 2$ entailing that 
$e^{-\frac{n}{32\l( \zeta^{2}(X)+ \zeta(X)/12 \r)}} \leq e^{-\frac{n}{40\zeta^2(X) \vee 6}}$.

\subsubsection{Bounds for the moment generating function of the t-statistic.}

Recall that $T_j:= \frac{\bar{\mu}_j - \mu}{\wh{\sigma}_j}$ and $Q_j:=\frac{\bar \mu_j - \mu}{V_j}$ where $V_j^2 = \frac{1}{|G_j|}\sum_{i\in G_j} (X_i - \mu)^2$, $j=1,\ldots,k$. For all $p\geq 1$ define 
\[
w_j = \frac{\sigma^{p-1}T_{j}}{\wh{\sigma}_j^{p-1}}\I\{\mathcal{E}_j\} - \mb E \l( \frac{\sigma^{p-1}T_{j}}{\wh{\sigma}_j^{p-1}}\I\{\mathcal{E}_j\} \r)
\]
where $\m E_j=\{|Q_j| \leq 1/2\}\cap\{V_j \geq \sigma / 2\} $.
\begin{lemma}
\label{lem:concentration2}
There exists $c_p>0$ such that, for all $\lambda \in \mathbf{R}$. we have
\be
\mb E(e^{\lambda w_1}) \leq e^{c_p\lambda^2/(2n)}.
\ee
\end{lemma}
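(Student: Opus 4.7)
The plan is to show that the random variable inside the centering, namely $Z_1 := \frac{\sigma^{p-1}T_{1}}{\wh{\sigma}_1^{p-1}}\I\{\mathcal{E}_1\}$, is sub-Gaussian with sub-Gaussian norm of order $1/\sqrt{n}$. Once this is in hand, a standard centering argument (e.g.\ Proposition 2.5.2 in \cite{vershynin2018high}) directly yields the desired MGF bound for $w_1 = Z_1 - \mb E Z_1$. The work is therefore entirely in controlling $Z_1$ on the event $\mathcal{E}_1 = \{|Q_1|\leq 1/2\}\cap\{V_1\geq \sigma/2\}$.

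The first step is to note the algebraic identity
\[
\wh\sigma_j^2 = V_j^2 - (\bar\mu_j - \mu)^2 = V_j^2(1 - Q_j^2),
\]
obtained by expanding $\frac{1}{n}\sum_{i\in G_j}(X_i-\mu)^2$ around $\bar\mu_j$. On $\mathcal{E}_1$ we have $1-Q_1^2 \geq 3/4$ and $V_1 \geq \sigma/2$, hence $\wh\sigma_1 \geq \sigma\sqrt{3}/4$, and consequently
\[
\l(\frac{\sigma}{\wh\sigma_1}\r)^{p-1}\I\{\mathcal{E}_1\} \leq \l(\tfrac{4}{\sqrt{3}}\r)^{p-1} =: M_p.
\]
Therefore $|Z_1| \leq M_p \l|T_1 \I\{|Q_1|\leq 1/2, V_1\geq \sigma/2\}\r|$ pointwise.

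The second step invokes the tail inequality displayed just before the statement of the lemma, namely
\[
\pr{\l|T_1 \I\{|Q_1|\leq 1/2, V_1\geq \sigma/2\}\r| \geq \frac{11x}{\sqrt{n}}} \leq 4 e^{-x^2/2},
\]
which combined with the bound above gives $\pr{|Z_1| \geq 11 M_p x/\sqrt{n}} \leq 4 e^{-x^2/2}$ for all $x>0$. A change of variables shows $Z_1$ has sub-Gaussian norm $\lesssim M_p/\sqrt{n}$ in the sense of section 2.5 of \cite{vershynin2018high}. Centering then produces a random variable $w_1$ with sub-Gaussian norm of the same order (up to an absolute constant), and the equivalence of sub-Gaussian characterizations in Proposition 2.5.2 of \cite{vershynin2018high} gives $\mb E e^{\lambda w_1} \leq e^{c_p \lambda^2/(2n)}$ for all $\lambda\in \mb R$, with $c_p$ depending only on $p$ through $M_p$.

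The only mild subtlety I anticipate is bookkeeping of the prefactor $4$ (rather than $2$) in the tail bound and the fact that the bound on the sub-Gaussian norm must hold for all $x>0$, not just $x \leq \sqrt{n}/18$ as in the refined bound \eqref{eq:sns-5}; this is why the plan uses the truncated-variable version of the inequality rather than \eqref{eq:sns-5} itself. Since $|Z_1|$ is uniformly bounded on $\mathcal{E}_1$ (because $T_1 = Q_1/\sqrt{1-Q_1^2}$ is bounded by $1/\sqrt{3}$ when $|Q_1|\leq 1/2$), the tail bound extends trivially to all $x>0$, so no additional argument is needed.
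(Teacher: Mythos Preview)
Your proof is correct and follows essentially the same route as the paper: bound $\wh\sigma_1$ from below on $\mathcal{E}_1$ to reduce to the sub-Gaussian tail of $T_1\I\{\mathcal{E}_1\}$, then pass from tails to the MGF bound via Proposition~2.5.2 in \cite{vershynin2018high}. The only cosmetic difference is that the paper handles the centering step by an explicit symmetrization (introduce an independent copy $\wt w_1$ and use Jensen to get $\mb E e^{\lambda w_1}\le \mb E e^{\lambda(w_1-\wt w_1)}$), whereas you invoke the centering lemma directly; the two are equivalent.
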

\begin{proof}
We start by observing that on event $\mathcal{E}_1$, $\wh{\sigma}_1 = V_{1}\sqrt{1 - Q_1^2} \geq \frac{\sqrt{3}\sigma}{4}$. 
Hence for all $t>0$, the discussion following \eqref{eq:sns-5} and the inequality $|T_1\mathbf{1}\{\m E_1\}| \leq \frac{\sqrt{3}}{3}$ imply that
\be
\pr{\l|\frac{\sigma^{p-1}T_{1}}{\wh{\sigma}_1^{p-1}}\I\{\m E_1\}\r| \geq t } \leq  
\pr{\l|T_1\I\{\m E_1\}\r| \geq t \l( \frac{\sqrt 3}{4}\r)^{p-1} } \leq 4e^{-c'_p nt^{2}}
\ee
where $c'_p = \frac{1}{2\cdot 11^2}\l( \frac{\sqrt{3}}{4}\r)^{2p-2}$. 
Next, let $\wt w_1$ be an independent copy of $w_1$, and note that 
\ben
\label{eq:sg}
\Pr\l(\l|w_1 - \wt w_1\r| \geq t \r) \leq 8e^{-c'_p nt^{2}/4}.
\een 
It follows from Jensen's inequality that 
\[
\mb E e^{\lambda w_1} \leq  \mb E e^{\lambda (w_1 - \wt w_1)}.
\]
Finally, is well-known that, in view of \eqref{eq:sg}, the latter is bounded by $e^{\lambda^2 c_p/(2n)}$ for some $c_p>0$ only depending on $p$ (for instance, this follows from  Proposition 2.5.2 in \cite{vershynin2018high}). 

\end{proof}

\subsection{Auxiliary technical results.}

\begin{lemma}
\label{lem:bias}
Let $p \geq 1$ and $\delta\geq 1$. Assume that $\mb E(|X-\mu|^{1+\delta}) < \infty$. Then for any $1\leq j\leq k$
\be
\sigma^{p-1}\mb E\l(\frac{\bar{\mu}_j-\mu}{V_j^{p}}\mathbf{1}\{V_j^2 \geq \sigma^2 /4\}\r)=o\left(\frac{1}{n^{\delta/2}}\right),
\ee
for $\delta<2$. At the same time, for $\delta \geq 2$, we have
\be
\sigma^{p-1}\mb E\l(\frac{\bar{\mu}_j-\mu}{V_j^{p}}\mathbf{1}\{V_j^2 \geq \sigma^2 /4\}\r)=O\left(\frac{1}{n}\right).
\ee
\end{lemma}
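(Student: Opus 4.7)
The strategy exploits the independence of $Y_1 := X_1 - \mu$ from the ``leave-one-out'' quantity $U^2 := \frac{1}{n}\sum_{i=2}^n Y_i^2 = V_j^2 - Y_1^2/n$, combined with a two-level truncation that handles the potential nonexistence of $\mb E|Y_1|^3$.

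By rescaling I may assume $\sigma = 1$; write $Y_i = X_i - \mu$, $\bar{Y} = \bar{\mu}_j - \mu$, $V := V_j$, and $A = \{V^2 \geq 1/4\}$. Exchangeability of the $Y_i$'s gives $\mb E[\bar{Y}/V^p \mathbf{1}_A] = \mb E[Y_1/V^p \mathbf{1}_A]$. Let $B = \{U \geq 1/2\}$. On $B$ one has $V \geq U \geq 1/2$, so $\mathbf{1}_A \equiv 1$; and by Lemma \ref{lem:sns-2} (applied to $Y_2,\ldots,Y_n$), $\pr{B^c} \leq e^{-cn}$. Using $|Y_1/V^p| \leq 2^p|Y_1|$ on $A$ together with the independence of $Y_1$ from $(U,\mathbf{1}_{B^c})$, the contribution from $B^c$ is bounded by $2^p\,\mb E|Y_1|\cdot \pr{B^c} = O(e^{-cn})$, which is negligible.

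On $B$ I exploit the cancellation $\mb E[Y_1/U^p\, \mathbf{1}_B] = \mb E[\mathbf{1}_B/U^p]\cdot \mb E Y_1 = 0$ to rewrite the main contribution as $\mb E\l[Y_1(1/V^p - 1/U^p)\mathbf{1}_B\r]$. Applying Taylor's theorem to $t \mapsto (U^2 + t/n)^{-p/2}$ between $t = 0$ and $t = Y_1^2$ gives the deterministic bound $|1/V^p - 1/U^p| \leq \frac{p}{2n} U^{-p-2} Y_1^2 \leq C_p\, Y_1^2/n$ on $B$. A naive application yields a bound in $\mb E|Y_1|^3/n$, which is infinite when $\delta < 2$. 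I therefore truncate at level $t>0$:
\[
\l|\mb E[Y_1(1/V^p - 1/U^p)\mathbf{1}_B]\r| \leq \frac{C_p}{n}\mb E\l[|Y_1|^3 \mathbf{1}\{|Y_1|\leq t\}\r] + C_p\,\mb E\l[|Y_1|\mathbf{1}\{|Y_1|>t\}\r],
\]
where the second summand uses the crude bound $|1/V^p - 1/U^p| \leq C_p$ valid on $B$.

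The remaining step is a moment squeeze. For $\delta \geq 2$ one has $\mb E|Y_1|^3 < \infty$, so letting $t \to \infty$ in the first summand immediately gives $O(1/n)$. For $1 \leq \delta < 2$, take $t = \sqrt n$ and apply a secondary split at $s = n^{1/4}$:
\[
\mb E[|Y_1|^3 \mathbf{1}\{|Y_1|\leq t\}] \leq s^{2-\delta}\mb E|Y_1|^{1+\delta} + t^{2-\delta} h(s), \quad \mb E[|Y_1|\mathbf{1}\{|Y_1|>t\}] \leq t^{-\delta} h(t),
\]
where $h(u) := \mb E[|Y_1|^{1+\delta}\mathbf{1}\{|Y_1|>u\}] \to 0$ by uniform integrability. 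Since $(2+\delta)/4 > \delta/2$ for $\delta < 2$, the deterministic piece contributes $s^{2-\delta}/n = n^{-(2+\delta)/4} = o(n^{-\delta/2})$, and the pieces involving $h(s)$ and $h(t)$ decay faster than $n^{-\delta/2}$. Multiplying through by $\sigma^{p-1}$ (i.e., undoing the initial rescaling) yields the claim. The main obstacle is precisely this moment squeeze: upgrading the natural $O(n^{-\delta/2})$ bound to the advertised $o(n^{-\delta/2})$ requires exploiting uniform integrability via the two-level truncation rather than a single moment inequality.
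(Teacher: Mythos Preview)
Your proof is correct and follows essentially the same route as the paper: reduce to a single summand by exchangeability, subtract the leave-one-out term $Y_1/U^p$ (whose expectation vanishes by independence), Taylor-expand to produce a cubic remainder, and then truncate to upgrade $O(n^{-\delta/2})$ to $o(n^{-\delta/2})$ via uniform integrability of $|Y_1|^{1+\delta}$. The only cosmetic difference is in the truncation bookkeeping: the paper uses a single adaptive level $\alpha_n\sqrt{n}$ with $\alpha_n = g(n^{1/4})^{1/(2+\delta)}\vee n^{-1/4}$, whereas your two-level split at $\sqrt{n}$ and $n^{1/4}$ achieves the same bound with slightly less tuning.
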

\begin{proof}
Due to homogeneity, we can assume that $\sigma =1$ without loss of generality. 
We will also assume that $\mu=0$, otherwise $X_j$ should be replaced by $X_j - \mu$ for all $j$. Observe that
\ben\label{eq:bias-1}
\sigma^{p-1}\mb E\l(\frac{\bar{\mu}_j-\mu}{V_j^{p}}\mathbf{1}\{V_j^2 \geq \sigma^2/4\}\r) = n^{p/2}\mb E\left(\frac{X_1}{(X_1^2+Z^2)^{p/2}}\mathbf{1}\{X_1^2+Z^2 \geq n/4\}\right),
\een
where $Z = \sqrt{\sum_{i=2}^n X_i^2}$. Consider the event $\mathcal{O}_1=\{Z^2 \geq n/4\}$, and recall that in view of Lemma $\ref{lem:sns-2}$
\be
\Pr(\mathcal{O}^c_1) \leq e^{-cn}
\ee
for some $c=c(P)>0$ that depends on the distribution $P$ of $X$. 
Consider the event
\[
\mathcal{O}_2 = \{ |X_1| \leq \alpha_n\sqrt{n}/2\},
\] 
where the sequence $\{\alpha_n\}_{n\geq 1}$ is defined as follows: consider a non-increasing function 
$g(u) = \mb E(|X|^{1+\delta}\mathbf{1}_{\{|X|\geq u\}})$, and observe that $\underset{u \to \infty}{\lim}g(u) = 0$. 
Therefore, taking $\alpha_n := g(n^{1/4})^{1/(2+\delta)} \vee n^{-1/4}$, we get that $\alpha_n \to 0$, and moreover
\be
\underset{n \to \infty}{\lim} \frac{g(\alpha_n \sqrt{n})}{\alpha_n^{1+\delta}} \leq \underset{n \to \infty}{\lim} \frac{g(n^{1/4})}{\alpha_n^{1+\delta}} \leq \underset{n \to \infty}{\lim} g^{\frac{1}{2+\delta}}(n^{1/4}) = 0.
\ee
It is easy to see that
\be
\Pr(\mathcal{O}^c_2) = o\left( \frac{1}{n^{(1+\delta)/2}}\right),
\ee
and that
\be
\mb E(|X_{1}|\mathbf{1}_{\{\mathcal{O}_2^c\}}) = o\left(\frac{1}{n^{\delta/2}}\right).
\ee
Indeed, Markov's inequality implies that
\be
\Pr(\mathcal{O}^c_2) 
\leq \frac{2^{1+\delta}\mb E(|X|^{1+\delta}\mathbf{1}_{\{\mathcal{O}^c_2\}})}{\alpha_n^{1+\delta} n^{(1+\delta)/2}} 
= \frac{2^{1+\delta}g(\alpha_n \sqrt{n}/2)}{\alpha_n^{1+\delta} n^{(1+\delta)/2}},
\ee
and 
\ml{
\mb E(|X_{1}|\mathbf{1}_{\{\mathcal{O}_2^c\}})  
\leq \frac{2^{\delta}\mb E (|X|^{1+\delta}\mathbf{1}_{\{\mathcal{O}^c_2\}})}{\alpha_n^{\delta} n^{\delta/2}} 
= \frac{2^{\delta}g(\alpha_n \sqrt{n}/2)}{\alpha_n^{\delta} n^{\delta/2}}
\\
\leq 2^{\delta}\l( g(n^{1/4})\r)^{2/(2+\delta)} n^{-\delta/2},
}
where we used H\"{o}lder's inequality. 
Next, we will reduce the problem to the case where $X$ and $Z$ are bounded. 
Define the event $\wt{\m O} := \mathcal{O}_1 \cap \mathcal{O}_2$. Then $1 = \I_{\wt{\m O}} + \I_{\m O_1\setminus \m O_2 }+ \I_{ \m O_2\setminus \m O_1}$, and 
\mln{
\label{eq:bias-2}
\left|\mb E\left(\frac{X_1}{(X_1^2+Z^2)^{p/2}}\mathbf{1}\{X_1^2+Z^2 \geq n/4\}\right) \right| 
\\
\leq \left|\mb E\left(\frac{X_1}{(X_1^2+Z^2)^{p/2}}\mathbf{1}_{\{ \wt{\m O} \}}\right) \right| + (n/4)^{-p/2}\left(\mb E(|X_1|\mathbf{1}\{\mathcal{O}_2^c\} + \frac{\alpha_n\sqrt{n}}{2} \Pr(\mathcal{O}_1^c)\right)\\
\leq \left|\mb E\left(\frac{X_1}{(X_1^2+Z^2)^{p/2}}\mathbf{1}_{\{ \wt{\m O} \}}\right) \right| + o\l(\frac{1}{n^{(p+\delta)/2}}\r).
}
Letting $F$ be the distribution function of $X$, we deduce that conditionally on $Z$
\mln{
\label{eq:B3}
\left|\mb E\left(\frac{X_1}{(X_1^2+Z^2)^{p/2}}\mathbf{1}_{\{ \wt{\m O} \}}\right)\right| 
=  \left|\int_{-\alpha_n\sqrt{n}/2}^{\alpha_n\sqrt{n}/2} \frac{x}{(x^2+Z^2)^{p/2}}\mathbf{1}_{\{\mathcal{O}_1\}}dF(x)\right|
\\
\leq \left|\int_{-\alpha_n\sqrt{n}/2}^{\alpha_n\sqrt{n}/2} \left(\frac{x}{(x^2+Z^2)^{p/2}}-\frac{x}{Z^p}\right)\mathbf{1}_{\{\mathcal{O}_1\}}dF(x)\right| + \l(\frac{2}{\sqrt{n}}\r)^{p}\mb E \l(|X_{1}|\mathbf{1}_{\{\mathcal{O}_2^c\}} \r)
\\
\leq \left|\int_{-\alpha_n\sqrt{n}/2}^{\alpha_n\sqrt{n}/2} \frac{xZ^p(1-(1+x^2/Z^2)^{p/2})}{(x^2+Z^2)^{p/2}Z^p}\mathbf{1}_{\{\mathcal{O}_1\}}dF(x)\right| + o\l(\frac{1}{n^{(p+\delta)/2}}\r)
\\
\leq \int_{-\alpha_n\sqrt{n}/2}^{\alpha_n\sqrt{n}/2} \frac{p|x|^3}{2Z^{p+2}}\mathbf{1}_{\{\mathcal{O}_1\}}dF(x) + o\l(\frac{1}{n^{(p+\delta)/2}}\r)
\\
 \leq C(p)\mb E|X|^{1+\delta} \frac{\alpha_n^{2-\delta}}{n^{(p+\delta)/2}} + o\l(\frac{1}{n^{(p+\delta)/2}}\r).
}
In the derivation above, we used the elementary inequality 
\ben
\label{eq:elem-ineq}
\frac{(1+t)^{p/2}-1}{(1+t)^{p/2}} = \frac{\int_1^{1+t} \frac{p}{2} y^{p/2 -1} dy}{(1+t)^{p/2}} \leq \frac{pt}{2} \frac{(1+t)^{p/2-1}}{(1+t)^{p/2}}\leq pt/2 
\een
for $0<t:=\frac{x^2}{Z^2}$ and the fact that $\mb E|X|^{1+\delta}<\infty$. 
Combining \eqref{eq:B3} with \eqref{eq:bias-1},\eqref{eq:bias-2}, we see that 
\be
\sigma^{p-1}\mb E\l(\frac{\bar{\mu}_j-\mu}{V_j^{p}}\mathbf{1}\{V_j^2 \geq \sigma^2/4\}\r) = o\left( \frac{1}{n^{\delta/2}}\right)
\ee
whenever $\delta<2$ and that
\be
\sigma^{p-1}\mb E\l(\frac{\bar{\mu}_j-\mu}{V_j^{p}}\mathbf{1}\{V_j^2 \geq \sigma^2/4\}\r) = O\left( \frac{1}{n}\right),
\ee
for $\delta = 2$ (in fact, in this case all the terms are of order $o\l(n^{-1}\r)$ besides $ C(p)n^{p/2}\mb E|X|^{1+\delta} \frac{\alpha_n^{2-\delta}}{n^{(p+\delta)/2}}$ which is $O\l(n^{-1}\r)$).
\end{proof}
\begin{remark}
It follows from the previous argument that the term $o\l( \frac{1}{n^{\delta/2}}\r)$ takes the form
\[
n^{-\delta/2}\cdot \l( n^{-\frac{2-\delta}{4}} \vee g^{\frac{2-\delta}{2+\delta}}(n^{1/4})\r).
\]
\end{remark}
\begin{remark}
The key quantity of interest in the previous proof is given by the expression
\be
\left|\mb E\left(\frac{X_1}{(X_1^2+Z^2)^{p/2}}\mathbf{1}\{X_1^2+Z^2 \geq n/4\}\right) \right|
\ee 
that was then estimated from above. 
Let us present a counterexample showing that one cannot improve the result of Lemma \ref{lem:bias} when $\delta \geq 2$ for $p=1$. To this end, let $X$ be a random variable such that $\Pr(X = a) = 1/(1+a^2) $ and $\Pr(X = -1/a) = a^2/(a^2+1)$ for some $1 < a^2 \leq 2 $ and assume that $n \geq 8$. 
Observe that $X$ is a.s. bounded by $a$, centered, and has variance $1$. 

Given $x,y>0$, we say that $x \asymp y$ when $c \leq x/y \leq C$ for some absolute constants $c,C>0$. Let $\mb E_Z$ denote the conditional expectation with respect to $Z$.  It is easy to check that on the event $\mathcal{A}:=\{Z^2 \geq n/4\}$ we have
\ml{
\mb E_Z\left(\frac{X_1}{\sqrt{X_1^2 + Z^2}}\mathbf{1}\{X_1^2+Z^2 \geq n/4\} \I\{\m A\} \right) = \mb E_Z\left(\frac{X_1}{\sqrt{X_1^2 + Z^2}} \right)\I\{\m A\} \\
=\frac{a}{\sqrt{a^2 + Z^2}}\frac{1}{1+a^2}\I\{\m A\}  - \frac{1}{a\sqrt{1/a^2 + Z^2}}\frac{a^2}{1+a^2}\I\{\m A\} 
\\
= \frac{a}{1+a^2}\frac{\sqrt{1/a^2 + Z^2}-\sqrt{a^2 + Z^2}}{\sqrt{a^2 + Z^2}\sqrt{1/a^2 + Z^2}}\I\{\m A\} \\
= \frac{a}{1+a^2}\frac{1/a^2 -a^2 }{\sqrt{a^2 + Z^2}\sqrt{1/a^2 + Z^2}(\sqrt{1/a^2 + Z^2}+\sqrt{a^2 + Z^2})}\I\{\m A\} 
\\
\asymp \frac{a}{1+a^2}\frac{a^2 - 1/a^2}{Z^{3}}\I\{\m A\} 
\asymp \frac{1}{n^{3/2}}\I\{\m A\} ,
}
where we have used that on $\m A$ both $a^2$ and $1/a^2$ are smaller than $Z^2$ and that $Z^2 \asymp n$. Since $a$ does not depend on $n$, $X$ is a.s. bounded by an absolute constant, and $\Pr(  \m A) \geq 1-e^{-cn}$ for some absolute constant $c>0$. Hence
\be 
\mb E\left(\frac{X_1}{\sqrt{X_1^2 + Z^2}}\mathbf{1}\{X_1^2+Z^2 \geq n/4\}  \right) \asymp \frac{1}{n^{3/2}}\Pr(\m A) + \Pr(\m A^c) \asymp \frac{1}{n^{3/2}}.
\ee

It follows that, for $p=1$, we have
\ml{
\mb E\l(\frac{\bar{\mu}_j-\mu}{V_j}\mathbf{1}\{V_j^2 \geq \sigma^2/4\}\r) 
\\ = n^{1/2}\mb E\left(\frac{X_1}{(X_1^2+Z^2)^{1/2}}\mathbf{1}\{X_1^2+Z^2 \geq n/4\}\right)\asymp n^{-1}.
}
Although $X$ admits infinitely many moments, the previous bound cannot be improved beyond three moments due to the asymmetry of the distribution of $X$.
\end{remark}

\begin{lemma}
\label{lem:variance}
Let $p\geq1$. If $\var(X) < \infty$, then
\be
\underset{n \to \infty}{\lim}\sigma^{2p-2}n\,\mb E\l(\frac{\bar{\mu}_1-\mu}{V_1^{p}}\r)^2\mathbf{1}\{V_1^2 \geq \sigma^2 /4\} = 1.
\ee
\end{lemma}
\begin{proof}
Again, we can assume without loss of generality that $\sigma^2=1$ and that $\mb EX = 0$. Observe that
\ml{
n\,\mb E\l(\frac{\bar{\mu}_1 - \mu}{V_1^{p}}\r)^2 \mathbf{1}\{V_1^2 \geq \sigma^2 /4\} = n^{p}\mb E\left(\frac{X_1^2}{(X_1^2+Z^2)^{p}}\mathbf{1}\{X_1^2+ Z^2 \geq n /4\}\right) 
\\
+ n^{p}\mb E\left(\frac{X_1Y}{(X_1^2+Z^2)^{p}}\mathbf{1}\{X_1^2 + Z^2 \geq n /4\}\right),
}
where $Y = \sum_{i=2}^nX_i$ and $Z = \sqrt{\sum_{i=2}^n X_i^2}$. 
It is clear that
\be
n^{p}\frac{X_1^2}{(X_1^2+Z^2)^{p}} \I\{X_1^2 + Z^2 \geq n /4\} \to X_1^2 \text{ in probability.}
\ee
Indeed, $\I\{X_1^2 + Z^2 \geq n /4 \} \to 1$ in probability in view of Lemma \ref{lem:sns-2}, while $\l(\frac{X_1^2 + Z^2}{n}\r)^p \to 1$ in probability by the Law of Large Numbers. 

Moreover, $n^{p}\frac{X_1^2}{(X_1^2+Z^2)^{p}} \I\{X_1^2 + Z^2 \geq n /4\} \leq 4^{p}X_1^2$. Therefore 
\be
n^{p}\mb E\left(\frac{X_1^2}{(X_1^2+Z^2)^{p}}\mathbf{1}\{X_1^2 + Z^2 \geq n /4\}\right) \xrightarrow{n\to\infty} 1.
\ee
It remains to prove that
\be
n^{p}\mb E\left(\frac{X_1Y}{(X_1^2+Z^2)^{p}}\mathbf{1}\{X_1^2 + Z^2 \geq n /4\}\right) \xrightarrow{n\to\infty} 0.
\ee
Consider the event $\mathcal{O}_1=\{Z^2 \geq n/4\}$, and recall that 
$\Pr(\mathcal{O}^c_1) \leq e^{-cn}$ for some $c>0$ that depends on the distribution of $X$ as given in Lemma $\ref{lem:sns-2}$. 
We will also need to consider the event 
\[
\mathcal{O}_2 = \{ |X_1| \leq \alpha_n\sqrt{n}/2\}
\] 
where $(\alpha_n)_n$ is defined as in Lemma \ref{lem:bias} with $\delta=1$. Namely, consider the non-increasing function $g(u) = \mb E(|X|^{2}\mathbf{1}_{\{|X|\geq u\}})$, and define $\alpha_n = g(n^{1/4})^{1/3} \vee n^{-1/4}$, so that $\alpha_n \to 0$ and 
\be
\underset{n \to \infty}{\lim} \frac{g(\alpha_n \sqrt{n})}{\alpha_n^{2}} \leq \underset{n \to \infty}{\lim} \frac{g(n^{1/4})}{\alpha_n^{2}} =0.
\ee
As in the proof of Lemma \ref{lem:bias}, we deduce that $\Pr(\mathcal{O}^c_2) = o\left( \frac{1}{n}\right)$ and that
\ben
\label{eq:o-small}
\mb E(|X_{1}|\mathbf{1}_{\{\mathcal{O}_2^c\}}) = o\left(\frac{1}{\sqrt{n}}\right).
\een
Next, we will reduce the problem to the case where $X$ and $Z$ are bounded. Let $\wt{\m O}:= \mathcal{O}_1 \cap \mathcal{O}_2$. Then
\mln{
\label{eq:b1}
n^{p} \left|\mb E\frac{X_1Y}{(X_1^2+Z^2)^{p}}\mathbf{1}\{X_1^2 + Z^2 \geq n /4\}\right|
\\
\leq n^p\left|\mb E\left(\frac{X_1Y}{(X_1^2+Z^2)^{p}}\mathbf{1}_{\{\wt{\m O}\}}\right) \right| + c_1(p) \sqrt{n}\mb E(|X_1|\mathbf{1}_{\mathcal{O}_2^c}) 
\\ +c_2(p) n \l(\Pr(\mathcal{O}_1^c)\r)^{1/2}.
}
Indeed, $1 = \I_{\wt{\m O}} + \I_{\m O_1\setminus \m O_2 }+ \I_{ \m O_2\setminus \m O_1}$, and 
\ml{
\l| \mb E\frac{X_1Y}{(X_1^2+Z^2)^{p}} \I\{X_1^2 + Z^2 \geq n /4\} \I_{\{ \m O_1\setminus \m O_2 \}} \right| 
\\
\leq \l(\frac{4}{n}\r)^{p-1/2}\mb E \l| X_1\I_{\{ \m O_2^c\}} \frac{2Y}{\sqrt n} \r| 
\\
= \l( \frac{4}{n}\r)^{p-1/2} \mb E \l| X_1\I_{\{ \m O_2^c\}} \r| \mb E \l|\frac{2Y}{\sqrt n} \r| 
\\
\leq 2\l(\frac{4}{n}\r)^{p-1/2} \mb E \l| X_1\I_{\{ \m O_2^c\}} \r|:=\frac{c_1(p)}{n^{p-1/2}} \mb E \l| X_1\I_{\{ \m O_2^c\}} \r|
}
as $\mb E \l|\frac{Y}{\sqrt n} \r| \leq \frac{\mb E^{1/2} Y^2}{\sqrt n} \leq 1$. Moreover, 
\ml{
\l| \mb E\frac{X_1Y}{(X_1^2+Z^2)^{p}} \I\{X_1^2 + Z^2 \geq n /4\} \I_{\{ \m O_2 \setminus \m O_1\}} \right| 
\leq \l( \frac{4}{n}\r)^{p-1/2} \frac{\alpha_n\sqrt{n}}{2} \mb E \l|\frac{2Y}{\sqrt n} \I_{\m O_1^c}\r| 
\\
\leq 2\l( \frac{4}{n}\r)^{p-1/2} \frac{\alpha_n\sqrt{n}}{2} \l( \pr{\m O_1^c} \r)^{1/2} := \frac{c_2(p)}{n^{p-1}} \l( \pr{\m O_1^c} \r)^{1/2},
}
thus \eqref{eq:b1} follows.
Next, letting $F$ be the distribution function of $X$, we deduce that conditionally on $(Y,Z)$,
\ml{
\left|\mb E\left[\frac{X_1Y}{(X_1^2+Z^2)^{p}}\mathbf{1}_{\{ \wt{\m O} \}} \big| Y,Z\right]\right| 
=  \left| \int_{-\alpha_n\sqrt{n}/2}^{\alpha_n\sqrt{n}/2} \frac{xY}{(x^2+Z^2)^{p}}\mathbf{1}_{\{\mathcal{O}_1\}}dF(x)\right|
\\
= \left|\int_{-\alpha_n\sqrt{n}/2}^{\alpha_n\sqrt{n}/2} \left(\frac{xY}{(x^2+Z^2)^{p}}-\frac{xY}{Z^{2p}}\right)\mathbf{1}_{\{\mathcal{O}_1\}}dF(x)\right| 
\\
+\l(\frac{4}{n}\r)^{p-1/2} \frac{|Y| \I_{\m O_1}}{Z}\mb E(|X_{1}|\mathbf{1}_{\{\mathcal{O}_2^c\}})
\\
\leq \left|\int_{-\alpha_n\sqrt{n}/2}^{\alpha_n\sqrt{n}/2} \frac{xYZ^{2p}(1-(1+x^2/Z^2)^{p})}{(x^2+Z^2)^{p}Z^{2p}}\mathbf{1}_{\{\mathcal{O}_1\}} dF(x)\right| 
\\
+ \l(\frac{4}{n}\r)^{p-1/2} \frac{|Y| \I_{\m O_1}}{Z}\mb E(|X_{1}|\mathbf{1}_{\{\mathcal{O}_2^c\}})
\\
\leq \int_{-\alpha_n\sqrt{n}/2}^{\alpha_n\sqrt{n}/2} \frac{p|x|^3|Y|}{Z^{2p+2}}\mathbf{1}_{\{\mathcal{O}_1\}}dF(x)
\\+ 
\l(\frac{4}{n}\r)^{p-1/2} \frac{|Y| \I_{\m O_1}}{Z}\mb E(|X_{1}|\mathbf{1}_{\{\mathcal{O}_2^c\}}).
}
In the derivation above, we used the bound 
\ml{
\l| \int_{-\alpha_n\sqrt{n}/2}^{\alpha_n\sqrt{n}/2} \frac{xY}{Z^{2p}} \I_{\{\mathcal{O}_1\} }dF(x) \r| 
\\
\leq \l(\frac{4}{n}\r)^{p-1/2} \frac{|Y| \I_{\m O_1}}{Z} \l| \int_\mb R x dF(x) - \int_{-\alpha_n\sqrt{n}/2}^{\alpha_n\sqrt{n}/2} x dF(x)\r|
\\
= \l(\frac{4}{n}\r)^{p-1/2} \frac{|Y| \I_{\m O_1}}{Z} \l| \mb E X_1 \I_{\m O_2^c} \r|
}
and relation
\[
\l|\frac{x Y Z^{2p}(1-(1+x^2/Z^2)^{p})}{(x^2+Z^2)^{p}Z^{2p}} \r|
= \l|\frac{x Y (1-(1+x^2/Z^2)^{p})}{(x^2/Z^2+1)^{p} Z^{2p}} \r|
\leq \frac{p|x|^3|Y|}{Z^{2p+2}},
\]
where the last inequality follows from an elementary bound \eqref{eq:elem-ineq}. 
Moreover, 
\ml{
\mb E \l(\int_{-\alpha_n\sqrt{n}/2}^{\alpha_n\sqrt{n}/2} \frac{p|x|^3|Y|}{Z^{2p+2}}\mathbf{1}_{\{\mathcal{O}_1\}}dF(x) \r)
\\
\leq 2p \l(\frac{4}{n}\r)^{p+1/2} \mb E\l(\frac{|Y|}{\sqrt n}\r) \int_{-\alpha_n\sqrt{n}/2}^{\alpha_n\sqrt{n}/2} |x|^3 dF(x)
\\
\leq 4p\alpha_n \l(\frac{4}{n}\r)^{p}\int_\mb R x^2 dF(x) = o\l( \frac{1}{n^p}\r)
}
and 
\ml{
\mb E\l( \l(\frac{4}{n}\r)^{p-1/2} \frac{|Y| \I_{\m O_1}}{Z} \r) \mb E\l(|X_{1}|\mathbf{1}_{\{\mathcal{O}_2^c\}} \r) 
\\
\leq \l(\frac{4}{n}\r)^{p} \mb E\l(\frac{|Y|}{\sqrt n}\r) \sqrt{n}\mb E(|X_{1}|\mathbf{1}_{\{\mathcal{O}_2^c\}}) 
= o\l( \frac{1}{n^p}\r)
}
in view of \eqref{eq:o-small}. Therefore, we see that
\[
\underset{n \to \infty}{\lim}n^{p}\left|\mb E\frac{X_1Y}{(X_1^2+Z^2)^{p}}\mathbf{1}\{X_1^2 + Z^2 \geq n /4\}\right| = 0,
\]
concluding the proof.
\end{proof}

\begin{lemma}\label{lem:bias__variance_student}
Let $p\geq1$, assume that $\mb E |X-\mu|^{1+\delta} < \infty$ for some $\delta\geq 1$. Consider the event $\wt{\m O}=\{|Q_{1}|\leq 1/2 \} \cap \{ V_{1} \geq \sigma/2\}$. Then
\be
\sigma^{p-1}\l|\mb E\l(\frac{\bar{\mu}_1 - \mu}{\wh\sigma_1^{p}}\mathbf{1}\{ \wt{\m O} \}\r)\r| \leq \phi(\delta,n) + 2^{p-1} \sqrt{\frac{ke^{-cn}}{N}},
\ee
 where $c>0$ depends only on $\zeta(X)$, $\phi(\delta,n) = o(n^{-\delta/2})$ for $\delta<2$ and $\phi(\delta,n) = O(n^{-1})$ otherwise. 
Moreover, if $\var(X)<\infty$, then
\be
\var\l(\frac{\sqrt{n}\sigma^{p-1}(\bar{\mu}_1 - \mu)}{\wh\sigma_1^{p}}\mathbf{1}\{\wt{\m O}\}\r)  \xrightarrow{n \to \infty} 1.
\ee
\end{lemma}
\begin{proof} 
We will prove the two claims separately. 
Recall the algebraic identity $\wh{\sigma}_1 = V_1\sqrt{1-Q_1^2}$. To deduce the first inequality, observe that
\ml{
\l|\frac{\bar{\mu}_1-\mu}{\wh\sigma_1^{p}} - \frac{\bar{\mu}_1-\mu}{V_1^{p}}\r|\mathbf{1}\{\wt{\m O}\} 
\\
= \frac{|\bar{\mu}_1-\mu|}{V_1^{p}}\l| (1-Q_1^2)^{-p/2} -1 \r|\mathbf{1}\{ \wt{\m O} \} \leq p4^{p}\sigma^{1-p}|Q_1|^{3}\mathbf{1}\{V_1^2 \geq \sigma^2 /4\} ,
}
where we have used the elementary inequality 
\ben\label{eq:elementary_3}
(1 - t^2)^{-p/2} - 1 
= \l(1+\frac{t^2}{1-t^2}\r)^{p/2}-1 \leq \int_{0}^{\frac{4}{3} t^2}\frac{p}{2}(1+u)^{p/2-1}du 
\leq \frac{2^p p}{3^{p/2}}t^2
\een
that holds for all $0 \leq t \leq 1/2$.
Taking \eqref{eq:sns-2} into account, we get that
\be
\mb E(|Q_1|^{3}\mathbf{1}\{V_1^2 \geq \sigma^2 /4\}) \leq \frac{C}{n^{3/2}}
\ee
for an absolute constant $C>0$. Indeed, it directly follows from the inequality
\ben\label{eq:elementary_2}
\Pr\l( |Q_1|\mathbf{1}\{V_1^2 \geq \sigma^2 /4\}\geq \frac{9x}{\sqrt{n}} \r) \leq 4e^{-x^2}
\een
that is valid for all $x \geq 0$. As a consequence,
\ben\label{eq:asymp_bias_1} 
\l|\mb E\l(\frac{\bar{\mu}_1-\mu}{\wh\sigma_1^{p}}\mathbf{1}\{\wt{\m O}\}\r)\r| \leq \l|\mb E\l( \frac{\bar{\mu}_1-\mu}{V_1^{p}}\mathbf{1}\{\wt{\m O}\}\r) \r| + C\frac{p 4^{p}\sigma^{1-p}}{n^{3/2}}.
\een
Moreover, we have that
\mln{\label{eq:asymp_bias_12}
\l|\mb E\l(\frac{\bar{\mu}_1-\mu}{V_1^{p}}\mathbf{1}\{\wt{\m O}\}\r)\r| 
\leq \l|\mb E\l(\frac{\bar{\mu}_1-\mu}{V_1^{p}}\mathbf{1}\{V_1^2 \geq \sigma^2 /4\}\r)\r|\\
+ 2^{p-1}\sigma^{1-p}\mb E(|Q_1|\mathbf{1}\{|Q_1|\geq 1/2\}\cap\{V_1^2 \geq \sigma^2 /4\})  \\
\leq \l|\mb E\l(\frac{\bar{\mu}_1 - \mu}{V_1^{p}}\mathbf{1}\{V_1^2 \geq \sigma^2 /4\}\r)\r| \\
+2^{p-1}\sigma^{1-p}\sqrt{\mb E(Q_1^2\mathbf{1}\{V_1^2 \geq \sigma^2 /4\})\Pr(\{|Q_1|\geq 1/2\}\cap\{V_1^2 \geq \sigma^2 /4\})}  
\\
\leq \l|\mb E\l(\frac{\bar{\mu}_1 - \mu}{V_1^{p}}\mathbf{1}\{V_1^2 \geq \sigma^2 /4\}\r)\r| + 2^{p-1}\sigma^{1-p} \sqrt{\frac{ke^{-cn}}{N}},
}
where we have used \eqref{eq:elementary_2} in the last inequality. We conclude using Lemma \ref{lem:bias} as long as \eqref{eq:asymp_bias_1} and \eqref{eq:asymp_bias_12} that
\ben\label{eq:asymp_bias_2}
\l|\mb E\l(\frac{\bar{\mu}_1-\mu}{\wh\sigma_1^{p}}\mathbf{1}\{\wt{\m O}\}\r)\r| \leq \sigma^{1-p}\l(\phi(\delta,n)+ 2^{p-1} \sqrt{\frac{ke^{-cn}}{N}} \r).
\een
The first claim is a consequence of both \eqref{eq:asymp_bias_1} and \eqref{eq:asymp_bias_2} since $n^{-3/2}$ is always less than $\phi(\delta,n)$.

Next, we establish the second claim of the lemma. 
Since, due to the first inequality of the lemma, $\sqrt{n}\l|\mb E \frac{\sigma^{p-1}(\bar{\mu}_1 - \mu)}{\wh\sigma_1^{p}}\mathbf{1}\{\wt{\m O}\}\r|$ vanishes as $n$ goes to infinity, it is enough to prove that the second moment converges to $1$. 
We follow the same steps as in the first part to deduce that 
\mln{
\label{eq:asymp_var_1}
n\sigma^{2p-2}\mb E\l|\l(\frac{\bar{\mu}_1-\mu}{\wh\sigma_1^{p}}\r)^2 - \l(\frac{\bar{\mu}_1-\mu}{V_1^{p}}\r)^2\r|\mathbf{1}\{\wt{ \m O}\} 
\\
= n\sigma^{2p-2}\frac{|\bar{\mu}_1-\mu|^2}{V_1^{2p}}\l| (1-Q_1^2)^{-p} -1 \r|\mathbf{1}\{ \wt{\m O} \} \\
\leq 2p8^{p} n\mb E\l(|Q_1|^{4}\mathbf{1}\{V_1^2 \geq \sigma^2 /4\}\r) \leq \frac{pC8^p}{n},
}
where we have used \eqref{eq:elementary_3} in the first inequality and \eqref{eq:elementary_2} in the second one.
Moreover, we also have that
\mln{
\label{eq:asymp_var_2}
    n\sigma^{2p-2}\mb E\l|\l(\frac{\bar{\mu}_j-\mu}{V_j^{p}}\r)^2\mathbf{1}\{ \wt{\m O}\} - \l(\frac{\bar{\mu}_j-\mu}{V_j^{p}}\r)^2\mathbf{1}\{V_j^2 \geq \sigma^2 /4\}\r|  
    \\
    \leq n4^{p-1}\mb E(Q_1^2\mathbf{1}\{|Q_1|\geq 1/2\}\cap\{V_1^2 \geq \sigma^2 /4\})  
    \\
\leq n4^{p-1}\sqrt{\mb E(Q_1^4\mathbf{1}\{V_1^2 \geq \sigma^2 /4\})\Pr(\{|Q_1|\geq 1/2\}\cap\{V_1^2 \geq \sigma^2 /4\})} 
   \\
   \leq 4^{p-1} e^{-c'n},
}
where we again used \eqref{eq:elementary_2}.  Combining \eqref{eq:asymp_var_1} and \eqref{eq:asymp_var_2}, we get that
\ml{
\underset{n \to \infty}{\lim}\var\l(\frac{\sqrt{n}\sigma^{p-1}(\bar{\mu}_1 - \mu)}{\wh\sigma_1^{p}}\mathbf{1}\{\wt{\m O}\}\r)  \\ = \underset{n \to \infty}{\lim} n\sigma^{2p-2}\mb E\l(\l(\frac{\bar{\mu}_j-\mu}{V_j^{p}}\r)^2\mathbf{1}\{V_j^2 \geq \sigma^2 /4\}\r) .
}
The conclusion follows immediately from Lemma \ref{lem:variance}.
\end{proof}

\begin{lemma}
\label{lemma:minmax}
In the framework of section \ref{sec:intro}, 
\[
\inf_{\wt \mu} \sup_{P\in\m P_{2,\sigma}} \nu^2(\wt\mu,P) = \sigma^2.
\]
\end{lemma}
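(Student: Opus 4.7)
The strategy is to prove matching upper and lower bounds. The upper bound is immediate from the classical CLT, while the lower bound follows by restricting to a Gaussian location sub-family and invoking a standard result on superefficiency.

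For the upper bound $\inf_{\wt\mu}\sup_P \nu^2(\wt\mu,P) \leq \sigma^2$, take $\wt\mu$ to be the sample mean $\bar X_N := \frac{1}{N}\sum_{i=1}^N X_i$. For every $P \in \m P_{2,\sigma}$, the classical central limit theorem yields $\sqrt{N}(\bar X_N - \mu) \xrightarrow{d} \m N(0,\sigma^2)$, so $\nu^2(\bar X_N, P) = \sigma^2$ uniformly in $P$, and the supremum is exactly $\sigma^2$.

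For the lower bound $\inf_{\wt\mu}\sup_P \nu^2(\wt\mu,P) \geq \sigma^2$, restrict attention to the parametric Gaussian sub-family $\m Q := \{\m N(\mu, \sigma^2) : \mu \in \mb R\} \subset \m P_{2,\sigma}$, noting that $\sigma^2$ is held fixed throughout. This family is quadratic mean differentiable with Fisher information $I(\mu) = 1/\sigma^2$ at every $\mu$. Given any asymptotically normal estimator sequence $\wt\mu_N$ satisfying $\sqrt{N}(\wt\mu_N - \mu) \xrightarrow{d} \m N(0, \nu^2(\wt\mu, \m N(\mu,\sigma^2)))$ under $\m N(\mu, \sigma^2)$ for every $\mu \in \mb R$, Le Cam's theorem on superefficient estimators (for instance, Theorem 8.14 in \cite{van2000asymptotic}) implies that the set $\{\mu \in \mb R : \nu^2(\wt\mu, \m N(\mu, \sigma^2)) < \sigma^2\}$ has Lebesgue measure zero. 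In particular this set cannot be all of $\mb R$, so $\sup_{\mu \in \mb R}\nu^2(\wt\mu, \m N(\mu,\sigma^2)) \geq \sigma^2$, and a fortiori $\sup_{P \in \m P_{2,\sigma}} \nu^2(\wt\mu, P) \geq \sigma^2$. Combining the two bounds yields the claim.

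The only subtle point is the invocation of the superefficiency theorem: one needs the pointwise convergence in distribution at every parameter value to be enough to conclude a pointwise Cram\'er--Rao-type bound at almost every $\mu$, which is precisely the content of Le Cam's result for regular parametric families such as the Gaussian location model. An equivalent, measure-theory-free route is to apply the H\'ajek--Le Cam local asymptotic minimax theorem with a bowl-shaped (truncated quadratic) loss: the minimax lower bound over $1/\sqrt{N}$-neighborhoods of any fixed $\mu_0$ is $\sigma^2$, which together with the assumed convergence $\sqrt{N}(\wt\mu_N - \mu) \xrightarrow{d} \m N(0, \nu^2)$ again forces $\nu^2 \geq \sigma^2$ somewhere in the sub-family.
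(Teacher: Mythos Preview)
Your proof is correct and follows essentially the same route as the paper: both the upper bound via the sample mean and the lower bound via restriction to the Gaussian location sub-family $\{\m N(\mu,\sigma^2):\mu\in\mb R\}$ match the paper's argument. The only difference is the specific van der Vaart result invoked for the lower bound: the paper cites the almost-everywhere convolution theorem (Theorem 8.9 in \cite{van2000asymptotic}), whereas you cite the superefficiency theorem (Theorem 8.14) with the local asymptotic minimax theorem as a backup; these are closely related results and either suffices here.
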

\begin{proof}
Let $\wt{\m P}$ be the  family of normal distributions $\l\{ N(\mu,\sigma^2), \ \mu\in \mb R\r\}$. 
Then we deduce from the almost-everywhere convolution theorem (Theorem 8.9 in \cite{van2000asymptotic}) that for any $\wt\mu$, $\sup_{P\in\m P_{2,\sigma}} \nu^2(\wt\mu,P) \geq \sigma^2$. 
On the other hand, letting $\wt\mu$ be the sample mean $\wt\mu = \frac{1}{N}\sum_{j=1}^N X_j$, we obtain the reverse inequality $\inf_{\wt \mu} \sup_{P\in\m P_{2,\sigma}} \nu^2(\wt\mu,P) \leq \sigma^2$. 
\end{proof}

\subsection{Proof of Lemma \ref{lem:denominator}.}\label{proof:lem:denominator}

We will first consider the outlier-free case, meaning that $O=0$. It is easy to see that  
\be
\l(\frac{1}{k}\sum_{j=1}^k \frac{1}{\wh \sigma^p_j} \r)^{-1} 
\leq 2\,\med\l( \wh\sigma^p_1,\ldots,\wh\sigma^p_k\r).
\ee
Hence, Bennett's inequality yields that
\mln{
\label{eq:base-ineq}
\Pr\l(\frac{1}{k}\sum_{j=1}^k \frac{1}{\wh \sigma^p_j}  \leq \frac{1}{(4\sigma)^{p}}\r) 
\leq  \Pr\l(\med\l( \wh\sigma_1,\ldots,\wh\sigma_k\r) \geq 2\sigma \r)
\\
\leq \Pr\l(\sum_{j=1}^k(\mathbf{1}\{\wh \sigma_{j}^2 \geq 4\sigma^2\} - \pi) \geq k/4 \r) 
 \leq e^{-ck\l(\log{\frac{1}{\pi}}+1\r)},
}
for some absolute constant $c>0$, where $\pi := \pr{\wh{\sigma}_1^2 \geq 4\sigma^2} \leq  \pr{V_1^2 \geq 4\sigma^2}\leq \frac{1}{4}$. 
Alternatively, if $X$ possesses more than 2 moments, we can apply von Bahr-Esseen inequality \cite{essen} to deduce that
\be
\pi \leq \frac{\mb E|X-\mu|^{1+\delta}/\sigma^{1+\delta}}{n^{\frac{\delta-1}{2}} },
\ee 
for any $\delta \geq 1$. It yields that 
\ben
\label{eq:simple_case_one}
\Pr\l(\frac{1}{k}\sum_{j=1}^k \frac{1}{\wh \sigma^p_j} \leq \frac{1}{(4\sigma)^p} \r) \leq e^{-c'k(1+(\delta-1)\log{n})} ,
\een
for $c'>0$ depending only on the ratio $\mb E|X-\mu|^{1+\delta}/\sigma^{1+\delta}$.
When $X$ has sub-Gaussian distribution, we instead use the Hanson-Wright inequality \cite{hanson1971bound} and deduce that 
\be
\pi \leq  e^{-cn \frac{\sigma^4}{\|X\|^4_{\psi_2}}}
\ee
where $c>0$ is an absolute constant and $\|X\|_{\psi_2}$ is the $\psi_2$ norm of $X$ \footnote{The $\psi_2$ norm of $X$ is defined via $\|X\|_{\psi_2}:=\inf\l\{ C>0: \ \mb E \exp\l( |X/C|^2\r)\leq 2\r\}$.}. 
In this case, \eqref{eq:base-ineq} yields that
\ben\label{eq:simple_case_two_2}
\Pr\l(\frac{1}{k}\sum_{j=1}^k \frac{1}{\wh \sigma^p_j}  \leq \frac{1}{(4\sigma)^{p}}\r) 
\leq e^{-c(P)kn} \leq e^{-\wt c(P)N}
\een
$c(P) := c_1 \frac{\sigma^4}{\|X\|^4_{\psi_2}}$ for an absolute constant $c_1>0$. 

Next, we consider the case $O>0$. 
Let $\wh\sigma_{(1)},\ldots,\wh\sigma_{(k)}$ be the increasing order statistics corresponding to $\wh\sigma_1,\ldots,\wh\sigma_k$. 
If $ O\leq C k$ for $C<1$, then at least a fraction of data buckets is outlier-free. 
Let us call the index set of these buckets $J$ so that $\card(J)\geq \lfloor (1-C)k \rfloor$, whence
\be
\frac{1}{k}\sum_{i=1}^k \frac{1}{\wh{\sigma}^p_i} 
\geq \frac{\lfloor \frac{(1-C)k}{2}\rfloor}{k} \frac{1}{ \wh{\sigma}^p_{\l( \lfloor (1-C)k/2)\rfloor \r)} }. 
\ee
Hence,  we get that
\be
\Pr\l( \frac{1}{k}\sum_{i=1}^k\frac{1}{\wh{\sigma}^p_i} \leq \l(\frac{1-C}{4\sigma}\r)^p \r) \leq  \Pr\l(\wh{\sigma}^p_{\l( \lfloor (1-C)k/2)\rfloor \r)} \geq 2\sigma \r).
\ee
The final result follows from \eqref{eq:simple_case_one} and \eqref{eq:simple_case_two_2} replacing $k$ by $\lfloor (1-C)k \rfloor$.
\begin{lemma}
\label{lem:asymptotic-variance}
Let $\wh{\sigma}_n$ be such that $\wh{\sigma}_n = V_n\sqrt{1-Q_n^2}$, and let $\wt{\m O}=\{|Q_n| \leq 1/2\}\cap\{V_n^2 \geq \sigma^2 / 4\} $ using previous notations. Then 
\be
\underset{n \to \infty}{\lim} \mb E\left|\frac{\sigma^p}{\wh{\sigma}^p_n}\mathbf{1}\{\wt{\m O}\}-1\right| = 0.
\ee
\end{lemma}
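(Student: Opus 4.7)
The plan is to reduce this to a bounded-convergence argument, using that on $\wt{\m O}$ the ratio $\sigma^p/\wh\sigma_n^p$ is uniformly bounded, and on $\wt{\m O}^c$ the indicator kills the first term so the integrand is just $1$. Concretely, on the event $\wt{\m O}$ we have $V_n \geq \sigma/2$ and $|Q_n|\leq 1/2$, hence $\wh\sigma_n = V_n\sqrt{1-Q_n^2} \geq \frac{\sigma\sqrt 3}{4}$, so
\[
\l|\frac{\sigma^p}{\wh\sigma_n^p}\I_{\wt{\m O}} - 1\r| \leq \max\l\{1,\ (4/\sqrt 3)^{p} - 1\r\}.
\]
This gives a deterministic dominating constant, independent of $n$.

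Next, I would show that the integrand tends to $0$ in probability. By the strong law of large numbers, $V_n^2 \to \sigma^2$ a.s.\ (using only $\mb EX^2 < \infty$), while $\bar\mu_n \to \mu$ a.s., so $Q_n = (\bar\mu_n - \mu)/V_n \to 0$ in probability. Consequently $\I\{V_n^2 \geq \sigma^2/4\} \to 1$ a.s.\ and $\I\{|Q_n|\leq 1/2\} \to 1$ in probability, which yields $\I_{\wt{\m O}} \to 1$ in probability. Since also $\wh\sigma_n = V_n\sqrt{1-Q_n^2} \to \sigma$ in probability, we deduce that $\sigma^p/\wh\sigma_n^p \to 1$ in probability, and therefore
\[
\frac{\sigma^p}{\wh\sigma_n^p}\I_{\wt{\m O}} - 1 \xrightarrow{P} 0.
\]

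The conclusion then follows from the bounded convergence theorem: convergence in probability combined with a uniform $L^\infty$ bound implies $L^1$ convergence, which is precisely the statement of the lemma.

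There is no real obstacle here; the only point worth double-checking is the deterministic lower bound on $\wh\sigma_n$ on the event $\wt{\m O}$, which is what allows us to dominate $\sigma^p/\wh\sigma_n^p\cdot\I_{\wt{\m O}}$ by a constant, and the appeal to a.s.\ consistency of $V_n$ under only two finite moments (covered by Kolmogorov's SLLN applied to $(X_i-\mu)^2$).
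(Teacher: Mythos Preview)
Your proof is correct and in fact cleaner than the paper's. Both arguments rely on the same two ingredients --- the deterministic lower bound $\wh\sigma_n \geq \frac{\sqrt{3}}{4}\sigma$ on $\wt{\m O}$, and the law of large numbers for $V_n^2$ --- but you apply bounded convergence to the whole expression at once, whereas the paper first decomposes
\[
\mb E\l|\frac{\sigma^p}{\wh\sigma_n^p}\I\{\wt{\m O}\}-1\r| \leq c_p\,\mb E\l|\frac{\wh\sigma_n-\sigma}{\wh\sigma_n}\r|\I\{\wt{\m O}\} + \Pr(\wt{\m O}^c),
\]
then further splits the first term into a piece involving $\frac{|V_n^2-\sigma^2|}{V_n^2+\sigma^2}$ and a piece involving $Q_n^2$. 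The paper extracts explicit rates for the $Q_n^2$ contribution (of order $1/n$) and for $\Pr(\wt{\m O}^c)$ (of order $e^{-cn}$), but for the remaining $|V_n^2-\sigma^2|/(V_n^2+\sigma^2)$ term it invokes exactly the same bounded-convergence reasoning you use globally. So your route is shorter and loses nothing for the purposes of this lemma; the paper's decomposition would only matter if one wanted a quantitative rate, which the statement does not ask for.

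One minor remark: your convergence statements can all be upgraded to almost-sure (since $\bar\mu_n\to\mu$ and $V_n^2\to\sigma^2$ a.s., hence $Q_n\to 0$ and $\I\{\wt{\m O}\}\to 1$ a.s.), which lets you invoke the ordinary dominated convergence theorem rather than the in-probability version. This is cosmetic, not a gap.
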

\begin{proof}
We have that $Q_n \leq 1/2$ and $\wh{\sigma}^2_n \geq 3/4V^2_n \geq 3/16 \sigma^2$ on $\wt{\m O}$. 
Therefore,
\be
\mb E\left|\frac{\sigma^p}{\wh{\sigma}^p_n}\mathbf{1}\{\wt{\m O}\}-1\right| \leq  \mb E\left|\frac{\wh{\sigma}^p_n - \sigma^p}{\wh{\sigma}_n^p}\right|\mathbf{1}\{\wt{\m O}\} + \Pr(\wt{\m O}^c)\\
 \leq  c_p\mb E\left|\frac{\wh{\sigma}_n - \sigma}{\wh{\sigma}_n}\right|\mathbf{1}\{\wt{\m O}\} + \Pr(\wt{\m O}^c)
 \ee
 where we have used that for $x\geq 3y/16 > 0$, 
 \be
 \frac{|x^p-y^p|}{x^p} = \frac{|x-y|}{x}\sum_{i=0}^{p-1}\l(\frac{y}{x}\r)^{p-i}\leq \l(\frac{16}{3}\r)^{p}\frac{|x-y|}{x}.
 \ee
 Moreover, 
 \ml{
  \mb E\left|\frac{\sigma^p}{\wh{\sigma}^p_n}\mathbf{1}\{\wt{\m O}\}-1\right|   \leq C_p \mb E\left|\frac{V_n^2 - \sigma^2}{V_n^2 + \sigma^2}\right|\mathbf{1}\{\wt{\m O}\} +c_p' \mb E\l(\frac{Q_n^2V^2_n}{V_n^2+\sigma^2}\mathbf{1}\{\wt{\m O}\}\r) + \Pr(\wt{\m O}^c) \\
    \leq C_p \mb E\left|\frac{V_n^2 - \sigma^2}{V_n^2 + \sigma^2}\right|\mathbf{1}\{\wt{\m O}\} + \frac{c'_{p}}{n} +  e^{-cn},
}
where we employed inequality \eqref{eq:elementary_2} and Lemma \ref{lem:sns-2}. 
Observing that the random variable $\left|\frac{V_n^2 - \sigma^2}{V_n^2 + \sigma^2}\right|\mathbf{1}\{\wt{\m O}\}$ converges to $0$  in probability (in view of the Law of Large Numbers) and is bounded, hence the convergence holds also in  $L^1$. 
This completes the proof.
\end{proof}


\subsection{Proof of Theorem \ref{thm:deviation}.}

Let $p\geq 1$. 
Denote $\wh{\mu}:=\wh{\mu}_{N,p}$ and consider the events 
\[
\mathcal{O}_j:=\{|Q_j| \leq 1/2\}\cap\{V_j \geq \sigma / 2\}.
\]
Set 
\ben\label{eq:event_E}
\m E: = \bigcap_{j=1}^k \mathcal{O}_j.
\een
Using Lemma \ref{lem:sns-2} and inequality \eqref{eq:sns-6}, we get that
\be
\Pr(\m E^c) \leq  ke^{-cn}
\ee
for some constant $c>0$ depending on the distribution of $X$. Therefore, for all $t > 0$
\[
\Pr(|\wh \mu_{N,p} - \mu| \geq  t) \leq \Pr(\{|\wh \mu_{N,p} - \mu| \geq  t\}\cap \m E \cap \Ep ) + ke^{-cn} + \Pr(\Ep^c).
\]
Recall the definition \eqref{eq:t-stat} of the t-statistics $T_1,\ldots,T_k$. The following chain of inequalities holds:
\ml{
\Pr(\{|\wh \mu_{N,p} - \mu| \geq  t\}\cap \m E \cap \Ep) \leq \Pr\l(\l\{ \l|\sum_{j=1}^k \frac{T_j}{\wh \sigma_j^{p-1}}   \r| \geq t \frac{k}{4^p\sigma^p} \r\} \cap \m E \r) \\
\leq \Pr\l( \l|\sum_{j=1}^k \frac{T_j}{\wh \sigma_j^{p-1}}\mathbf{1}\{\mathcal{O}_j\}   \r| \geq t \frac{k}{4^p\sigma^p} \r) \\
\leq \Pr\l( \l|\sum_{j=1}^k w_j   \r| \geq t \frac{k}{4^p\sigma} - k\l |\mb E\l( \frac{\sigma^{p-1}T_j}{\wh \sigma_j^{p-1}}\mathbf{1}\{\mathcal{O}_j\} \r) \r| \r) 
}
where $w_j := \frac{\sigma^{p-1}T_j}{\wh \sigma_j^{p-1}}\mathbf{1}\{\mathcal{O}_j\} - \mb E\l( \frac{\sigma^{p-1}T_j}{\wh \sigma_j^{p-1}}\mathbf{1}\{\mathcal{O}_j\} \r)$. 
It is easy to check that $\sqrt{n}w_j$ is a centered sub-Gaussian random variable, since in view of Lemma \ref{lem:concentration2} we have that for all $\lambda \in \mathbf{R}$,
\be
\mb E\l( e^{\sqrt{n}\lambda w_j}\r) \leq e^{c_p\lambda^2/2}
\ee
for some $c_p>0$ depending only on $p$. Choosing $t$ as
\be
t  = 4^{p}\sigma \l|\mb E\l( \frac{\sigma^{p-1}T_j}{\wh \sigma_j^{p-1}}\mathbf{1}\{\mathcal{O}_j\} \r) \r| + 4^p\sigma\sqrt{\frac{2c_ps}{N}},
\ee 
we get that 
\ml{
\Pr(\{|\wh \mu - \mu| \geq  t\}\cap \m E \cap \Ep) \leq   \Pr\l( \l|\sum_{j=1}^k w_j   \r| \geq k\sqrt{\frac{2c_ps}{N}} \r) \\
\leq 
\Pr\l( \sum_{j=1}^k \sqrt{\frac{2sN}{c_p}}w_j    \geq 2sk \r) + \Pr\l( -\sum_{j=1}^k \sqrt{\frac{2sN}{c_p}}w_j    \geq 2sk \r)\\
\leq 2 \l( \mb E \l[ e^{\sqrt{\frac{2s}{c_p k}}\sqrt{n}w_j}\r] \r)^k e^{-2s} \leq 2e^{-s},
}
where we used Chernoff bound on the last step. 
Combining the display above with Lemma \ref{lem:bias__variance_student}, we conclude that for all $s>0$
\be
\Pr\l(|\wh \mu - \mu| \geq  C_p\sigma \l(\phi(\delta,n) + \sqrt{\frac{s+ke^{-cn}}{N}}\r)\r) \leq  2e^{-s} + ke^{-cn} +\Pr(\Ep^c),
\ee
for some $C_p>0$ depending only on $p$. 
When $ke^{-cn}\geq 1$, the previous bound is trivial. It follows that 
\be
\Pr\l(|\wh \mu - \mu| \geq  C_p\sigma \l(\phi(\delta,n) + \sqrt{\frac{s+1}{N}}\r)\r) \leq  2e^{-s} + ke^{-cn}  +\Pr(\Ep^c)
\ee
for all $s>0$.

\subsection{Proof of Theorem \ref{thm:outlier}.}

The proof follows similar steps as the argument used to establish Theorem \ref{thm:deviation}. 
We will first show that with high probability the proportion of outliers in each bucket of observations is less than $1/2$. 
Indeed, letting $W_j$ denote the number of ouliers in the subsample indexed by $G_j$, it is straightfoward to see that 
$\sum_{j=1}^k W_j = O$, and that the random variables $\{W_j, \ j=1,\ldots,k\}$ are negatively correlated. 
Consider the event 
\[
\m E_2 = \bigcap_{j=1}^k\{ W_j \leq n/2 \}.
\]
 Recall that $W_j = \sum_{i \in G_j} \mathbf{1}_{i \in \mathcal{O}_j}$. Since $\sum_{j=1}^k W_j = O$,  the random variables $(\mathbf{1}_{i \in \mathcal{O}_j})_{i \in G_j}$ are $1$-negatively correlated for each $j=1,\dots,k$, as a sub-sequence of a $1$-negatively correlated sequence of random variables. Applying the Chernoff bound for negatively correlated random variables (see \citep[][section 1.10.2.2 and Theorem 1.10.23]{doerr2020probabilistic} for the definitions and the required version of the Chernoff bound), we get that as long as $ O \leq N/4$, 
\be
\Pr( \m E_2^c) \leq ke^{-cn}.
\ee
Hence in what follows, we can restrict our attention on the event $\m E_2$. 
We use the superscript $I$ to denote ``clean'' sample and $C$ (``corrupted'') -- otherwise. Notice that
\be
\bar{\mu}_j - \mu = \frac{W_j}{n}(\bar{\mu}^C_j - \mu)+\l(1 - \frac{W_j}{n}\r)(\bar{\mu}^{I}_j - \mu) = \frac{W_j}{n}(\bar{\mu}^C_j - \bar{\mu}_j^{I})+ \bar{\mu}^{I}_j - \mu
\ee
where $\bar{\mu}_j^C, \bar{\mu}_j^I$ are, respectively, empirical means of the corrupted and clean part of the sub-sample indexed by $G_j$. We also have that
\ben\label{eq:variance_outlier}
\wh{\sigma}_j^2 = \frac{W_j}{n} (\wh{\sigma}_j^C)^2 + \l(1-\frac{W_j}{n}\r) (\wh{\sigma}_j^I)^2 + \frac{W_j(n-W_j)}{n^2}(\bar{\mu}_j^C - \bar{\mu}_j^I)^2,
\een
where $(\wh{\sigma}_j^C)^2, (\wh{\sigma}_j^I)^2$ are, respectively, empirical variances of the corrupted and clean sub-samples of $G_j$. 
Observe that $\wh{\sigma}_j^2 \geq (\wh{\sigma}_j^I)^2/2$, and, therefore, as in the previous proof we deduce that the weights $\alpha_j$ given by $\eqref{eq:weights}$ can not be too large even when outliers are present in the sample. 
Consider the events $\mathcal{O}_j := \{|Q^{I}_j| \leq 1/2\}\cap\{V_j^I \geq \sigma / 2\}\cap \{ W_j \leq n/2 \} $, and 
\be
\m E := \bigcap_{j=1}^k \mathcal{O}_j .
\ee
Using Lemma \ref{lem:sns-2} and inequality \eqref{eq:sns-6}, we get that
\be
\Pr(\m E ^c) \leq ke^{-cn}
\ee
for some constant $c>0$ that depends only on the distribution of $X$. 
In the rest of the proof we assume that the event $\m E\cap \m E_p$ holds, with $\Ep$ defined in \eqref{eq:Ep}. 
On this event, we have that 
\ml{ 
\l| \wh{\mu} - \mu \r| \mathbf{1}\{\m E\} \leq \l(\frac{4\sigma}{1-C}\r)^p\underbrace{\l| \frac{1}{k}\sum_{j = 1}^k\frac{W_j(\bar{\mu}^{C}_j - \bar{\mu}_j^I) }{n\wh{\sigma}^p_j}\mathbf{1}\{\mathcal{O}_j\}\r|}_{(A)} \\
+ \l(\frac{4\sigma}{1-C}\r)^p\underbrace{\l|\frac{1}{k}\sum_{j = 1}^k\frac{\bar{\mu}^I_j - \mu}{\wh{\sigma}^p_j}\mathbf{1}\{\mathcal{O}_j\} - \mb E\l( \frac{\bar{\mu}^I_j - \mu}{\wh{\sigma}^p_j}\mathbf{1}\{\mathcal{O}_j\}\r)\r|}_{(B)} \\
+ \l(\frac{4\sigma}{1-C}\r)^p \underbrace{\frac{1}{k}\sum_{j=1}^k\l|\mb E\l( \frac{\bar{\mu}^I_j - \mu}{\wh{\sigma}^p_j}\mathbf{1}\{\mathcal{O}_j\}\r)\r|}_{(C)}.
}
We will proceed by estimating each of the terms separately.
\begin{description}
    \item[Control of (A):] 
    Using \eqref{eq:variance_outlier}, observe that on $\m O_j$ we have
    \be 
    \wh{\sigma}^2_j \geq \frac{(\wh\sigma_j^I)^2}{2} + \frac{W_j}{2n}(\bar{\mu}^C_j - \bar{\mu}^I_j)^2 \geq C'\l(\sigma^2 + \frac{W_j}{n}(\bar{\mu}^C_j - \bar{\mu}^I_j)^2 \r),
    \ee
   for some absolute constant $C'>0$. It comes out that
    \ml{
    \left| \frac{1}{k}\sum_{j = 1}^k\frac{W_j(\bar{\mu}^{C}_j - \bar{\mu}_j^I) }{n\wh{\sigma}^p_j}\mathbf{1}\{\mathcal{O}_j\} \right| \leq \frac{1}{k}\sum_{j=1}^k\frac{C_pW_j|\bar{\mu}^C_j - \bar{\mu}^I_j| }{ n\sqrt{ \sigma^2 + \frac{W_j}{n}(\bar{\mu}_j^C - \bar{\mu}_j^I)^2}^p} \\
    \leq \frac{C_p}{k\sigma^{p-1}}\sum_{j=1}^k\frac{W_j|\bar{\mu}^C_j - \bar{\mu}^I_j| }{ n\sqrt{ \sigma^2 + \frac{W_j}{n}(\bar{\mu}_j^C - \bar{\mu}_j^I)^2}} \\
    \leq \frac{C_p\alpha(\m O)^{(1-p)/2}}{k\sigma^{p-1}}\sum_{j=1 }^k\sqrt{W_j/n},
    }
    where $\alpha(\m O):= 1+ \underset{j/ W_j \neq 0}{\min}\frac{W_j(\bar{\mu}_j^C - \bar{\mu}_j^I)^2}{n\sigma^2}$. 
    Hence it follows from Cauchy-Schwarz inequality that
    \be
     \left| \frac{1}{k}\sum_{j = 1}^k\frac{W_j(\bar{\mu}^{C}_j - \bar{\mu}_j^I) }{n\wh{\sigma}^p_j}\mathbf{1}\{\mathcal{O}_j\} \right| 
    \leq\frac{2^p\alpha(\m O)^{(1-p)/2}}{\sigma^{p-1}}\frac{\sqrt{\sum_{j =1 }^kW_j}(\sqrt{O}\wedge \sqrt{k})}{k\sqrt{n}}.
    \ee
   As a consequence,
    \be
   \left| \frac{1}{k}\sum_{j = 1}^k\frac{W_j(\bar{\mu}^{C}_j - \bar{\mu}_j^I) }{n\wh{\sigma}^p_j}\mathbf{1}\{\mathcal{O}_j\} \right| \leq \frac{2^p\alpha(\m O)^{(1-p)/2}}{\sigma^{p-1}}\l(\frac{O}{k\sqrt{n}} \wedge \sqrt{\frac{O}{N}}\r).
    \ee
    Observe that the previous statement holds pointwise, and is not probabilistic in nature. 
    It also suggests that the worst scenario occurs whenever all buckets are corrupted.
    \item[Control of (B):]
    Since $\wh{\sigma}_j^2 \geq \l(\wh{\sigma}_j^I\r)^2/2$ under $\m O_j$, we have that
    \be 
    \Pr\l( \sigma^{p-1}\l|\frac{\bar{\mu}^I_i - \mu}{\wh{\sigma}^p_i}\r|\mathbf{1}\{\mathcal{O}_j\} \geq x \r) \leq \Pr\l( \sigma^{p-1}\l|\frac{\bar{\mu}^I_i - \mu}{\l(\wh{\sigma}_j^I\r)^p}\r|\mathbf{1}\{\mathcal{O}_j\} \geq 2^{p/2}x \r).
    \ee
    Hence we can show, as in Lemma \ref{lem:concentration2}, that the random variable $$\sigma^{p-1}\frac{\bar{\mu}^I_i - \mu}{\wh{\sigma}^p_i}\mathbf{1}\{\mathcal{O}_j\}$$ is sub-Gaussian. 
    Following the same arguments as in Theorem \ref{thm:deviation}, this leads to the bound
    \be
    \sigma^{p-1}\left|\frac{1}{k}\sum_{j = 1}^k\frac{\bar{\mu}^I_i - \mu}{\wh{\sigma}^p_i}\mathbf{1}\{\mathcal{O}_j\} - \mb E\l( \frac{\bar{\mu}^I_i - \mu}{\wh{\sigma}^p_i}\mathbf{1}\{\mathcal{O}_j\}\r)\right| \leq C_{p}\sqrt{\frac{s}{N}}
    \ee
   that holds with probability at least $1-2e^{-s}$ for some $C_p>0$.
    \item[Control of (C):]
    As for the ``bias term,'' it is enough to observe that for uncorrupted buckets, $\wh{\sigma}_j = \wh{\sigma}^{I}_j$, and the bias can be upper bounded exactly as in Theorem \ref{thm:deviation}. Hence
    \be
    \frac{\sigma^{p-1}}{k}\sum_{j\in I}\l|\mb E\l( \frac{\bar{\mu}^I_j - \mu}{\wh{\sigma}^p_j}\mathbf{1}\{\mathcal{O}_j\}\r)\r| \leq \phi(\delta,n) + C_p\sqrt{\frac{ke^{-cn}}{N}}.
    \ee
    At the same time, for the corrupted part of the bias term, we have on $\m O_j$ that 
    \be 
    \wh{\sigma}^p_j \geq C'_p\wh{\sigma}_j\l(\sigma^2 + \frac{W_j}{n}(\bar{\mu}^C_j - \bar{\mu}^I_j)^2\r)^{(p-1)/2},
    \ee
    for some $C_p'>0$ depending only on $p$. Hence
    \ml{
       \frac{\sigma^{p-1}}{k} \l|\sum_{j \in C}\mb E\l( \frac{\bar{\mu}^I_j - \mu}{\wh{\sigma}^p_j}\mathbf{1}\{\mathcal{O}_j\}\r)\r|
       \\
       \leq \frac{C_p\sigma^{p-1}}{k}\sum_{j \in C}\mb E\l| \frac{\bar{\mu}^I_j - \mu}{\wh{\sigma}^I_j (\sigma^2 + W_j/n(\bar{\mu}_j^C - \bar{\mu}_j^I)^2)^{(p-1)/2}}\mathbf{1}\{\mathcal{O}_j\}\r| \\
        \leq \frac{C_p\alpha(\m O)^{(1-p)/2}(O \wedge k)}{k}\mb E|T^I_{1}|\mathbf{1}\{ \m O_1\}
        \leq \frac{C_p\alpha(\m O)^{(1-p)/2}(O \wedge k)}{k\sqrt{n}} 
        \\
        \leq C_p\alpha(\m O)^{(1-p)/2}\l(\frac{O}{k\sqrt{n}} \wedge \sqrt{\frac{O}{N}}\r),
    }
    for some $C_p>0$, where we have used inequality \eqref{eq:elementary_2} and the fact that $O \wedge k \leq O \wedge \sqrt{Ok}$.
\end{description}
This concludes the proof of the fact that with probability at least $1-2e^{-s} - ke^{-cn} - \Pr(\Ep^c)$,
\ben
\label{eq:robust:neat}
|\wh\mu_{N,p} - \mu| \leq \frac{C_p\sigma}{(1-C)^p} \l( \sqrt{\frac{s+1}{N}} +   \phi(\delta,n)+\alpha(\m O)^{(1-p)/2} \l(\frac{\m O}{k\sqrt{n}} \wedge \sqrt{\frac{O}{N}}\r)\r).
\een

\subsection{Proof of Theorem \ref{thm:clt}.}

Using the definition of $\phi$, it is easy to see that $\sqrt{N_j}\phi(\delta,n_j) = o(1)$, implying that $k_j = o(n_j)$ which in turn implies that $k_j = o\l( e^{cn_j}\r)$ for any constant $c>0$.
We recall that
\be
\wh \mu_{N_j,p} - \mu =  \frac{\frac{1}{k_j}\sum_{i=1}^{k_j} \frac{T_i}{\wh \sigma_i^{p-1}} }{\frac{1}{k_j}\sum_{i=1}^{k_j} \frac{1}{\wh \sigma_i^p} }.
\ee
Next, we will use the following decomposition that holds on the event $\mathcal{E}$ \eqref{eq:event_E} defined in the proof of Theorem \ref{thm:deviation}.
\ml{
\sqrt{N_j}(\wh \mu - \mu)\mathbf{1}\{\mathcal{E}\} 
\\
= H\frac{\sqrt{n_j\var\l(\frac{T_1}{\wh \sigma_1^{p-1}}\mathbf{1}\{\mathcal{O}_1\}\r)}
\frac{\sum_{i=1}^{k_j} \l(\frac{T_i}{\wh \sigma_i^{p-1}}\mathbf{1}\{\mathcal{O}_i\} - \mb E\frac{T_i}{\wh \sigma_i^{p-1}}\mathbf{1}\{\mathcal{O}_i\}\r)}{\sqrt{\sum_{i=1}^{k_j}\var\l(\frac{T_i}{\wh \sigma_i^{p-1}}\mathbf{1}\{\mathcal{O}_i\}\r)}} + \sqrt{N_j}\mb E\frac{T_1}{\wh \sigma_1^{p-1}}\mathbf{1}\{\mathcal{O}_1\} }
{\frac{1}{k_j}\sum_{i=1}^{k_j} \l( \frac{1}{\wh \sigma_i^p}\mathbf{1}\{\mathcal{O}_i\}- \mb E\frac{1}{\wh \sigma_i^p}\mathbf{1}\{\mathcal{O}_i\} \r) + \mb E \l(\frac{1}{\wh \sigma_1^p}\mathbf{1}\{\mathcal{O}_1\} \r)}
}
where $H=\mathbf{1}\{\mathcal{E}\}. $Using Lemma \ref{lem:asymptotic-variance}, we have that
\be
\mb E\frac{1}{\wh \sigma_1^p}\mathbf{1}\{\mathcal{O}_1\} \xrightarrow{j \to \infty} \sigma^{-p}.
\ee
Moreover using Lemma \ref{lem:bias__variance_student} we have
\be
\sqrt{N_j}\l|\mb E\frac{T_1}{\wh \sigma_1^{p-1}}\mathbf{1}\{\mathcal{O}_1\} \r| \leq  \sqrt{N_j}\phi(\delta,n_j) + \sqrt{k_je^{-cn_j}} \xrightarrow{j \to \infty} 0,
\ee
and 
\be
\sqrt{n_j\var\l(\frac{T_1}{\wh \sigma_1^{p-1}}\mathbf{1}\{\mathcal{O}_1\}\r)} \xrightarrow{j \to \infty} \sigma^{1-p}.
\ee
Since the independent variables $\frac{T_i}{\wh{\sigma}_{i}^{p-1}}\mathbf{1}\{\mathcal{O}_i\}$ and $\frac{1}{\wh{\sigma}_{i}^{p}}\mathbf{1}\{\mathcal{O}_i\}$ are uniformly bounded, they satisfy Lindeberg's condition. Therefore, 
\be
\frac{\sum_{i=1}^{k_j} \l(\frac{T_i}{\wh \sigma_i^{p-1}}\mathbf{1}\{\mathcal{O}_i\} - \mb E\frac{T_i}{\wh \sigma_i^{p-1}}\mathbf{1}\{\mathcal{O}_i\}\r)}{\sqrt{\sum_{i=1}^{k_j}\var\l(\frac{T_i}{\wh \sigma_i^{p-1}}\mathbf{1}\{\mathcal{O}_i\} \r)}}  \xrightarrow{j \to \infty}\mathcal{N}(0,1)
\ee
in distribution, and 
\be
\frac{1}{k_j}\sum_{i=1}^{k_j} \l(\frac{1}{\wh \sigma_i^p}\mathbf{1}\{\mathcal{O}_i\}- \mb E \l(\frac{1}{\wh \sigma_i^p}\mathbf{1}\{\mathcal{O}_i\} \r)\r) \xrightarrow{j \to \infty} 0
\ee
in probability. In addition, we have that
\be
\Pr(\mathcal{E}^c) \leq k_je^{-n_j} \xrightarrow{j \to \infty} 0,
\ee
established as in the proof of Theorem \ref{thm:deviation}. Putting everything together, we finally conclude that 
\be
\sqrt{N_j}\l(\wh \mu_{N_j,p} - \mu\r)\xrightarrow{d} \mathcal{N}(0,\sigma^{2})
\ee
in distribution as $j \to \infty$. 

\subsection{Proof of Theorem \ref{thm:outlier_adaptive}.}
For any integer $m$, we denote by $\Ep(m)$ the event $\Ep$ defined via \eqref{eq:Ep} with $m$ blocks. For every event $\m A$, $\m A^c$ will denote its complementary.  
Observe that, as long as $1/20 \leq \frac{\wt{\sigma}}{\sigma} \leq 4$, we have the following inclusions
\be
\{ (\wt{k}\vee s) \geq 3 (O \vee s) \} \subset \{ \wt{k} \geq 3 (O \vee s) \} \subset \{ \wt{\Ep}(\lfloor 3 (O \vee s)/2\rfloor)^{c}\}   \subset \{ \Ep(\lfloor 3 (O \vee s)/2\rfloor)^{c}\} 
\ee
where $\lfloor 3 (O \vee s)/2\rfloor$ denotes the integer part of $3  (O \vee O)/2$.
Therefore, we deduce, using Lemma \ref{lem:denominator}, that 
\be
\Pr\l(   (\wt{k} \vee s) \geq 3 (O \vee s) \r) \leq e^{-c_2 (O \vee s)}.
\ee
Finally, we recall that when $\Ep$ holds and $k \leq 3 (O\vee s)$, then with probability at least $1- 2e^{-s} - k e^{-cN/ (O\vee s)}$  
\be
|\wh\mu_{N,p} - \mu| \leq C_p\sigma \l( \sqrt{\frac{s}{N}} + \sqrt{\frac{O}{N}}\r),
\ee
as shown in \eqref{eq:robust:neat}. Combining the previous results, we conclude that 
\ml{
\Pr\l( |\wt{\mu}_p(s) - \mu| \geq C_p\sigma \l( \sqrt{\frac{s}{N}} + \sqrt{\frac{ O}{N}}\r) \r) \\
\leq \Pr\l( \l\{|\wt{\mu}_p(s) - \mu| \geq C_p\sigma \l( \sqrt{\frac{s}{N}} + \sqrt{\frac{ O}{N}}\r) \r\} \cap \{ \wt{k} \leq 3 (O\vee s) \}  \r) + e^{-c_2 (O \vee s)}  \\
\leq \sum_{i=1}^{\log_2{(3 (O \vee s))}}(2e^{-s} + 2^{i}e^{-cN/ (O\vee s)}) + e^{-c_2 (O\vee s)} 
\\ \leq 2\log_2{(3 O)}e^{-s} + e^{-c  (O\vee s)} +  (O\vee s)e^{-cN/ (O \vee s)},
}
where we used in the last inequality the fact that $\{ (\wt{k} \vee s) \leq 3 (O \vee s) \}  \subset \l\{\l(\frac{1}{k}\sum_{j=1}^k \frac{1}{\wh \sigma^p_j}\r)^{-1}  \leq \l(\frac{160\sigma}{1-C}\r)^{p}\r\}$, so that event $\Ep$ holds.

\subsection{Construction of a robust estimator of $\sigma$.}
\label{app:robust_variance}

Let $N \geq 400$. Without loss of generality, we can assume that $N = 100 k$ where $k$ is an integer and that $\{1,\dots,N\} = \wt{G}_1 \cup \dots \cup \wt{G}_{k} $ where $\wt{G}_j = \{ 100(j-1)+1, \dots , 100j\}$ for all $j=1,\dots,k$. 
Let $\wt{\sigma}$ be defined as follows:
\be
\wt{\sigma} := \med\l(\frac{1}{50}\sum_{2i \in \wt{G}_1}|X_{2i} - X_{2i-1}|, \dots ,\frac{1}{50}\sum_{2i \in \wt{G}_k}|X_{2i} - X_{2i-1}|\r)
\ee
Under two moments assumption, the following result holds.
\begin{lemma}
\label{lemma:var}
Assume that $\mb E|X-\mu|^2 < \infty$, $\mb E|X-\mu| \geq \sigma /2$ and that $ O \leq N/400$. Then, with probability at least $1- e^{-cN}$, we have that
\be 
1/20 \leq \frac{\wt{\sigma}}{\sigma} \leq 4,
\ee
where $c>0$ is an absolute constant.
\end{lemma}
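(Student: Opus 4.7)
The plan is to analyze each of the $k$ block-averages $S_\ell := \tfrac{1}{50}\sum_{2i \in \wt G_\ell} |X_{2i} - X_{2i-1}|$ separately, use Chebyshev's inequality to show that every ``clean'' block (one containing no outlier) has $S_\ell \in [\sigma/20, 4\sigma]$ with probability comfortably bigger than $2/3$, and then invoke a Hoeffding bound to conclude that more than $k/2$ of the $S_\ell$ fall into this interval with probability at least $1 - e^{-cN}$. Since the median of $k$ numbers lies in any interval containing more than half of them, this yields the claim.

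For the per-block step, I would introduce $Y := |X - X'|$ where $X, X'$ are two independent copies of the underlying random variable. Conditioning on $X$ and applying Jensen's inequality to $|\cdot|$ gives $\mb E Y \geq \mb E|X - \mu| \geq \sigma/2$ by assumption, while the triangle inequality together with $\mb E|X - \mu| \leq \sqrt{\var(X)} = \sigma$ gives $\mb E Y \leq 2\sigma$. The second moment is exact, $\mb E Y^2 = \mb E(X - X')^2 = 2\sigma^2$, so $\var(Y) \leq 2\sigma^2$. For every fully clean block $\wt G_\ell$ the $50$ pair-differences inside it are i.i.d. copies of $Y$, so $\var(S_\ell) \leq \sigma^2/25$ and Chebyshev at deviation $9\sigma/20$ yields
\[
\pr{|S_\ell - \mb E Y| > 9\sigma/20} \leq \frac{\sigma^2/25}{(9\sigma/20)^2} = \frac{16}{81}.
\]
On the complementary event, because $\mb E Y \in [\sigma/2, 2\sigma]$, one has $S_\ell \in [\sigma/2 - 9\sigma/20,\, 2\sigma + 9\sigma/20] \subseteq [\sigma/20, 4\sigma]$. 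Thus each clean block is ``good'' (its $S_\ell$ lies in the target interval) with probability at least $p^\ast := 65/81 > 2/3$, independently across blocks.

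In the final step I would count the good blocks. Each outlier lies in exactly one block and $\m O \leq N/400 = k/4$, so at least $m := k - \m O \geq 3k/4$ blocks are fully clean, and the number of good clean blocks stochastically dominates $\mathrm{Bin}(m, p^\ast)$, whose mean $m p^\ast \geq (3k/4)(65/81)$ strictly exceeds $k/2$. Hoeffding's inequality then gives
\[
\pr{\text{fewer than } k/2 \text{ good clean blocks}} \leq \exp\!\l(-\frac{2(m p^\ast - k/2)^2}{m}\r) \leq e^{-c_0 k} = e^{-cN}
\]
for absolute constants $c_0, c > 0$, using $m \leq k$ and $k = N/100$. On the complementary event more than half of the $S_\ell$ lie in $[\sigma/20, 4\sigma]$, whence so does $\wt\sigma = \med(S_1, \dots, S_k)$. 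The main obstacle will be the arithmetic in this last step: the $1/4$ outlier fraction leaves only a $3/4$ majority of clean blocks, which forces the Chebyshev success probability to exceed $2/3$. The window $[\sigma/2, 2\sigma] \pm 9\sigma/20$ is chosen precisely so that the lower endpoint $\sigma/2 - 9\sigma/20 = \sigma/20$ matches the target while the Chebyshev failure probability $16/81$ stays comfortably below $1/3$.
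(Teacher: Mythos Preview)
Your argument is correct and follows essentially the same route as the paper's proof: bound $\mb E|X-X'|$ between $\sigma/2$ and $2\sigma$, apply Chebyshev per clean block to get failure probability $16/81<1/5$, and then use a Hoeffding bound on the indicator sum over the $\geq 3k/4$ clean blocks to force more than half of the $S_\ell$ into $[\sigma/20,4\sigma]$. The only cosmetic difference is that the paper phrases the Chebyshev step as a relative deviation $|S_\ell/\mb E Y - 1|\leq 9/10$ rather than your absolute deviation $|S_\ell - \mb E Y|\leq 9\sigma/20$; both choices yield the same $16/81$ bound and the same final conclusion.
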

Note that Lemma \ref{lemma:var} requires the new condition $\mb E|X-\mu| \geq \sigma /2$. The latter condition is mild and can be viewed as the equivalence between absolute first and second moments which is less restrictive than the equivalence between centered moments of order $2$ and $2+\delta$. This condition may also be seen as the price to pay for adaptation under only two moments.
\begin{proof}
Using Jensen's inequality, we get that $\mb E|X_1 - X_2| \geq \mb E|X - \mu| \geq \sigma /2$ and $\mb E |X_1 - X_2| \leq 2 \sigma$. Therefore,
\be 
\Pr\l(  1/20 \leq \frac{\wt{\sigma}}{\sigma} \leq 4 \r) \geq \Pr\l( \l| \frac{\wt{\sigma}}{\mb E|X_1 - X_2|} - 1 \r| \leq 9/10 \r).
\ee
Since 
\ml{
\Pr\l( \l|\frac{1}{50 \mb E|X_1 - X_2|}\sum_{2i \in \wt{G}_1}|X_{2i} - X_{2i-1}| - 1 \r| \geq 9/10 \r) 
\\ \leq \frac{200 \sigma^2}{4050 (\mb E|X_1 - X_2|)^2} \leq 1/5
}
and that $ O \leq k/4$, we conclude that
\ml{
\Pr\l( \l| \frac{\wt{\sigma}}{\mb E|X_1 - X_2|} - 1 \r| \geq 9/10 \r) \leq \Pr\l( \sum_{j=1}^{k- O} Z_{j}\geq  k/4 \r) \\
\leq \Pr\l( \sum_{j=1}^{k- O} (Z_{j} - \mb E Z_{j}) \geq  k/20 \r) \leq e^{-cN},
}
where $Z_{j}:= \mathbf{1}\l\{ \l|\frac{1}{50 \mb E|X_1 - X_2|}\sum_{2i \in \wt{G}_1}|X_{2i} - X_{2i-1}| - 1 \r| \geq 9/10\r\}$ and $c>0$.
\end{proof}

\section*{Acknowledgements}

Authors acknowledge support by the National Science Foundation grants DMS-1712956 and CIF-1908905. M.N. was partially supported by a James H. Zumberge Faculty Research and Innovation Fund at the University of Southern California.



\bibliographystyle{alpha}
\newcommand{\etalchar}[1]{$^{#1}$}


\end{document}